% July 30, 2012
% journal version; last edited by Adrian
% action items marked by xxx

% 7 figures: figure1, figure2, figure3, figure4, figure5, figure6, figure77 

\documentclass{journallet}
\usepackage{graphicx,amssymb,amsmath,amsfonts,fancyhdr,here}
\usepackage{latexsym}
\usepackage[small]{caption}

%-------------- Macros and Definitions -----------------------------------

\newcommand{\NN}{\mathbb{N}} %  set of natural numbers
 %  set of integer number
\newcommand{\RR}{\mathbb{R}} %  set of real numbers
 %  set of unit vectors

\newcommand{\eps}{\varepsilon}

\newcommand\area{{\rm area}}

\def\etal{{et~al.}}

\def\ie{{i.e.}}

\newcommand{\later}[1]{{}}
\newcommand{\old}[1]{{}}

\pagestyle{fancy}
\fancyhead[RO,LE]{\thepage}
\fancyhead[LO]{\it\nouppercase Packing anchored rectangles}
\fancyhead[RE]{\it\nouppercase\leftmark}
 \fancyfoot{}

%---------------------- Title ---------------------------------------------

\title{Packing anchored rectangles\footnote{A preliminary 
version of this paper appeared in the 
{\em Proceedings of the 23rd ACM-SIAM Symposium on Discrete Algorithms},  
(SODA 2012), Kyoto, Japan, January 2012.}}

\author{
Adrian Dumitrescu\thanks{Department of Computer Science,
University of Wisconsin--Milwaukee, USA\@.
Email: \texttt{dumitres@uwm.edu}.
Supported in part by the NSF grant DMS-1001667.}
\and
Csaba D. T\'oth\thanks{Department of Mathematics, University of
  Calgary, Canada and Department of Computer Science, Tufts
  University, Medford, MA, USA\@. 
Email: \texttt{cdtoth@ucalgary.ca}. 
Supported in part by the NSERC grant RGPIN 35586 and
the NSF grant CCF-0830734.}
}

\begin{document}
\maketitle

\begin{abstract}
Let $S$ be a set of $n$ points in the unit square $[0,1]^2$, one of
which is the origin. We construct $n$ pairwise interior-disjoint
axis-aligned empty rectangles such that the lower left corner of each
rectangle is a point in $S$, and the rectangles jointly cover at least
a positive constant area (about $0.09$). This is a first step towards
the solution of a longstanding conjecture that the rectangles in such
a packing can jointly cover an area of at least 1/2.
\end{abstract}

\section{Introduction}

We consider a rectangle packing problem proposed by 
Allen Freedman~\cite[p.~345]{Tu69} in the 1960s; see also~\cite[p.~113]{CFG91}. 
More recently, the problem was brought again to attention 
(including ours) by Peter Winkler~\cite{IBM04,Winkler07,Winkler10a,Winkler10b}.
It is a one-round game between Alice and Bob. First,
Alice chooses a finite point set $S$ in the unit square $U=[0,1]^2$ in
the plane, including the origin, that is, $(0,0)\in S$
(Fig.~\ref{fig:1}(a)). Then Bob chooses an axis-parallel rectangle
$r(s)\subseteq U$ for each point $s\in S$ such that $s$ is the
lower left corner of $r(s)$, and the interior of $r(s)$ is disjoint
from all other rectangles (Fig.~\ref{fig:1}(b)).
The rectangle $r(s)$ is said to be {\em anchored} at $s$, but $r(s)$
contains no point from $S$ in its interior.
It is conjectured that for any finite set $S\subset U$, $(0,0)\in S$,
Bob can choose such rectangles that jointly cover at least half of $U$.
However, it has not even been known whether Bob can always cover
at least a positive constant area. It is clear that Bob cannot always cover
$\frac{1}{2}+\eps$ area for any fixed $\eps>0$. If Alice
chooses $S$ to be a set of $n$ equally spaced points along the
diagonal $[(0,0),(1,1)]$, as in Fig.~\ref{fig:1}(c), then the total
area of Bob's rectangles is at most $\frac{1}{2}+\frac{1}{2n}$.
There has been no progress on this problem for more than 40 years, even
though it appeared several times in the literature.
\begin{figure}[htbp]
\centering
\includegraphics[width=.85\textwidth]{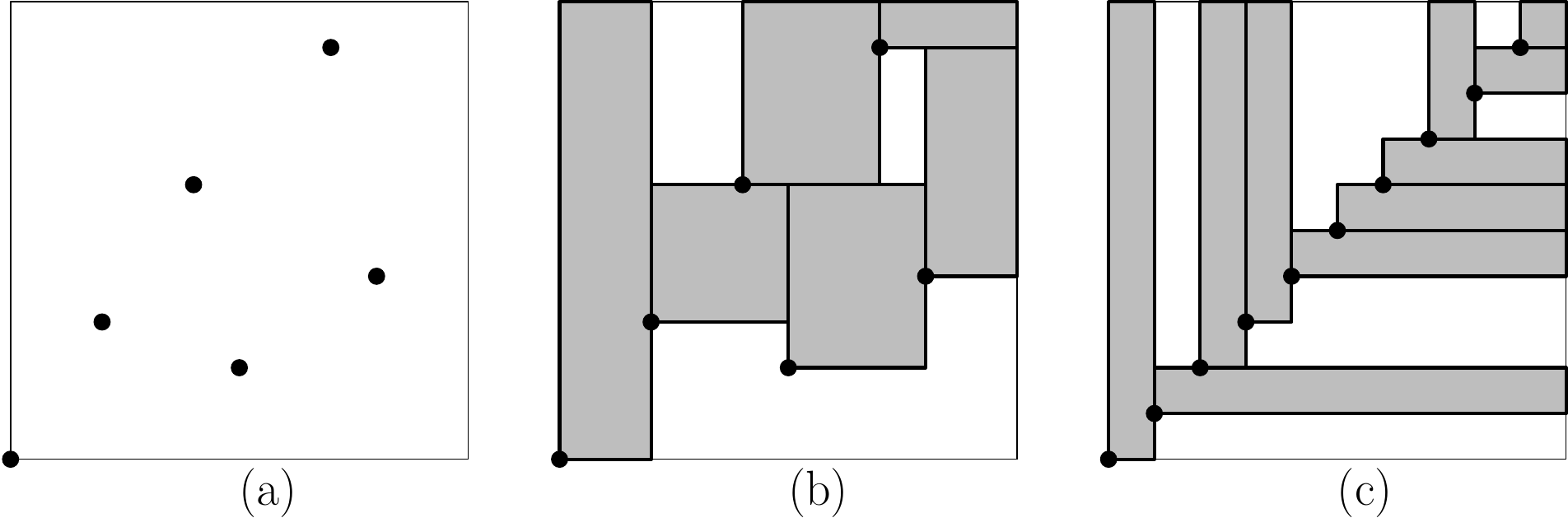}
%\vspace{-.5\baselineskip}
\caption{(a) A set $S$ of 6 points in a unit square $[0,1]^2$,
  including the origin $(0,0)$. 
(b) A rectangle packing where the lower left corner of each rectangle
  is a point in $S$. 
(c) Ten equally spaced points along the diagonal $[(0,0),(1,1)]$, and
  a corresponding rectangle packing that covers roughly 1/2 area.}
\label{fig:1}
\end{figure}

\paragraph{Outline.}
In this paper, we present two simple strategies for Bob that cover at least
$0.09121$ area. These are the {\sc GreedyPacking} and the
{\sc TilePacking} algorithms described below. Both algorithms process the
points in the same specific order, namely the decreasing order of the sum of the two
coordinates, with ties broken arbitrarily (hence $(0,0)$ is the last
point processed).

The {\sc GreedyPacking} algorithm chooses a rectangle of maximum area
for each point in $S$ sequentially, in the above order.

The {\sc TilePacking} algorithm partitions $U$ into staircase-shaped tiles, 
and then chooses a rectangle of maximum area within each tile independently.
We next describe how the tiling is obtained.
Each tile is a staircase-shaped polygon, with a vertical left
side, a horizontal bottom side, and a descending staircase
connecting them. The lower left corner of each tile is
a point in $S$. We say that the tile is {\em anchored} at that point.
The algorithm maintains the invariant that the set of unprocessed
points are in the interior of a staircase shaped polygon (super-tile),
and in addition the {\em anchor} and possibly other points
are on its left and lower sides.
Processing a point amounts to shooting a horizontal ray to the right
and a vertical ray upwards which together isolate a new tile
anchored at that point, and the new staircase shaped polygon
containing the remaining points is updated.
Since $(0,0) \in S$, {\sc TilePacking} does indeed compute a tiling of
the unit square.

It will be shown shortly (Lemma~\ref{lem:greedy}) that the {\sc GreedyPacking}
algorithm  covers at least as much area as {\sc TilePacking}. Hence it suffices to
analyze the performance of the latter. The bulk of the work is in the analysis of
this simple {\sc TilePacking} algorithm, which involves geometric considerations
and a charging scheme.

\paragraph{Related work.}
Very little is known about anchored rectangle packing. Recently,
Christ~\etal~\cite{CFG+11} proved that {\em if} Alice can force Bob's
share to be less than $\frac{1}{r}$, then $n\geq 2^{2^{\Omega(r)}}$. 
Our result indicates that this condition does not materialize 
for large $r$, since Bob can always cover at least
a constant fraction of the area for any $n\in \NN$.

Previous results on rectangle packing typically consider optimization
problems, and are only loosely related to our work.
While our focus here is not on the optimization version of the
anchored rectangle packing problem,
in which the total area of the anchored rectangles
is to be maximized for a given set $S$ of anchors, our
algorithms do provide a constant-factor approximation.

In the classical {\em strip packing} problem, $n$ given axis-aligned
rectangles should be placed (without rotation or overlaps) in a
rectangular container of width 1 and minimum height. This problem is
APX-hard (by a reduction from bin packing). After a series of previous
results ({\em e.g.},~\cite{Sch94,Ste97}), Harren~\etal~\cite{HJP+11}
recently found a $(5/3+\eps)$-approximation. Jensen and
Solis-Oba~\cite{JSO08} devised an AFPTAS which packs the rectangles
into a box of height at most $(1+\eps){\rm OPT}+1$ for every
$\eps>0$. Bansal~\etal~\cite{BHI+07} gave a 1.69-approximation
algorithm for the 3-dimensional version.

Further related problems are the {\em 2-dimensional knapsack} and {\em
  bin packing} problems. Given a set of axis-aligned rectangles and a
box $B$, the {\em geometric 2D knapsack} problem asks for a subset of
the rectangles of maximum total area that fit into $B$. In contrast,
the {\em 2D bin packing} asks for the minimum number of bins congruent to $B$
that can accommodate all rectangles. Jansen and Pr\"adel~\cite{JP}
designed a PTAS for the geometric 2D knapsack problem, although it does
not admit a FPTAS. The weighted version does not admit an AFPTAS and
an approximation algorithm by Jansen and Zhang~\cite{JZ07} guarantees
a ratio of $2+\eps$ for every $\eps>0$.
For the 2D bin packing, Jansen~\etal~\cite{JPS09}
gave a 2-approximation, and Bansal~\etal~\cite{BCS09} designed a
randomized algorithm with an {\em asymptotic} approximation ratio of
about $1.525$, improving the previous ratio $1.691$ by
Caprara~\cite{Cap02,Cap08}. However, 2D bin packing does not admit an
AFPTAS~\cite{BCKS06,CC09}.  Finally, we mention that
Bansal~\etal~\cite{BCKS06} gave a PTAS for the {\em rectangle
  placement} problem, which goes back to Erd\H{o}s and
Graham~\cite{EG75}. Here a given set of axis-aligned rectangles should be
arranged (without rotation or overlaps) so that the area of their
bounding box is minimized.

\section{Constructing a rectangle packing}

In this section we describe the two strategies for Bob
and then compare their performance.

\paragraph{Ordering the points in $S$.}
Let $S$ be a set of $n$ distinct points in the unit square $[0,1]^2$
such that $(0,0)\in S$. Denote by $x(s)$ and $y(s)$, respectively, the
$x$- and $y$-coordinates of each point $s\in S$. Order the points in
$S$ as $s_1,s_2,\ldots ,s_n$ such that
$$ x(s_j)+y(s_j)\leq x(s_i)+y(s_i) $$
for $1\leq i<j\leq n$ (ties are broken arbitrarily). 
Equivalently, this order is given by a left-moving sweep-line with slope $-1$.
See Fig.~\ref{fig:2}. Clearly, we have $s_n=(0,0)$. In {\sc GreedyPacking},
Bob chooses rectangles of maximum area for $s_1,\ldots , s_n$ in this order.

\begin{quote}
{\sc GreedyPacking.} For $i=1,\ldots , n$, choose an axis-aligned
rectangle $r_i\subseteq [0,1]^2$ of maximum area such that the lower
left corner of $r_i$ is $s_i$, and $r_i$ is interior-disjoint from any
$r_j$, $j<i$.
\end{quote}

Recall the partial order, called {\em dominance order}, among points
in the plane. For two points, $p=(x_p,y_p)$ and $q=(x_q,y_q)$, we say
that $p \preceq q$ (in words, {\em $q$ dominates $p$}) if
$$ x_p\leq x_q \hspace{1cm} \mbox{\rm and} \hspace{1cm} y_p\leq y_q. $$
With this definition, an axis-aligned rectangle with lower left
corner $c_1$ and upper right corner $c_2$
can be written as $\{p\in \RR^2: c_1\preceq p\preceq c_2\}$.
In particular, any point in $r(s)$ dominates $s$.

\begin{figure}[htbp]
\centering
\includegraphics[width=.9\textwidth]{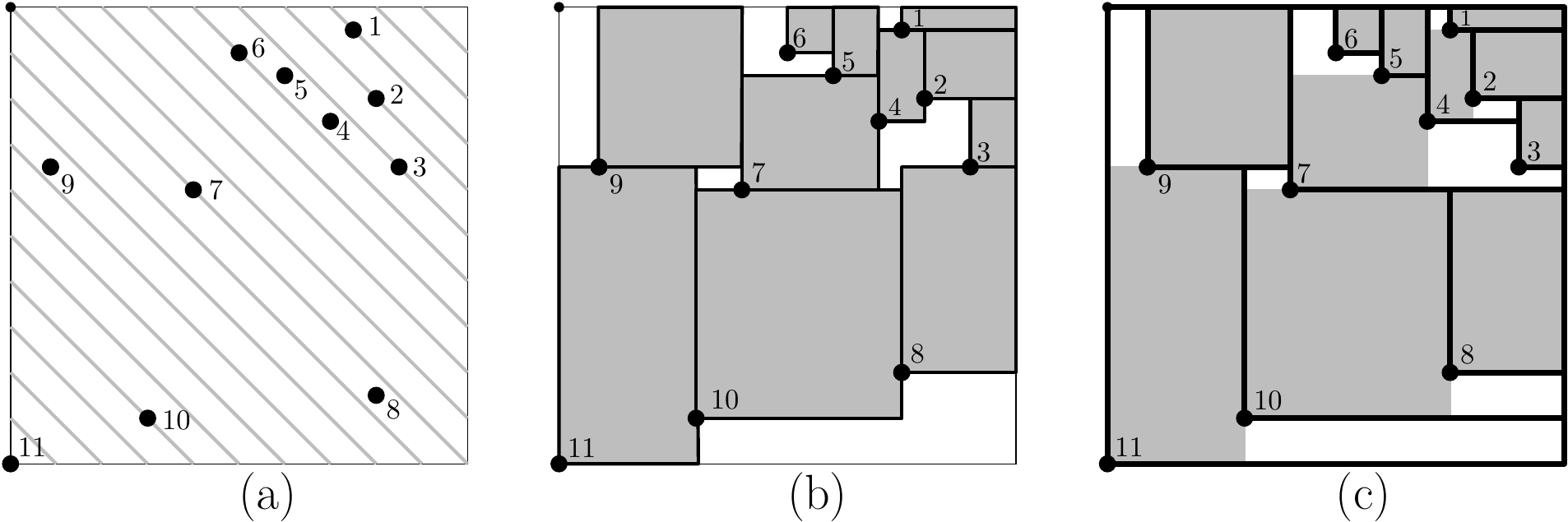}
%\vspace{-.5\baselineskip}
\caption{(a) Eleven points in $[0,1]^2$, sorted by decreasing order of
  the sum of coordinates. 
(b) Rectangles chosen greedily in this order.
(c) The tiling of $[0,1]^2$ induced by the dominance order; and the
  maximum-area rectangles chosen from each tile. Note that rectangle 8
  is smaller than the corresponding greedy rectangle.} 
\label{fig:2}
\end{figure}

We now define interior-disjoint {\em tiles} for the set
$S=\{s_1,\ldots , s_n\}$ that jointly cover the unit square
$U=[0,1]^2$. For $i=1,2,\ldots, n$, let tile $t_i$ be the set of
points in $U$ that dominate $s_i$, but have not been covered by any
previous tile $t_j$, $j<i$. Formally, let
$$t_i=\{ p\in [0,1]^2 : s_i\preceq p \mbox{ \rm and } s_j\not\preceq p
\mbox{ \rm for all } j<i\}.$$

The tiles are disjoint by definition, and they cover $U$ since the
origin is in $S$. Each tile is a {\em staircase polygon} with
axis-aligned sides, bounded by one horizontal side from below,
one vertical side from the left, and a monotone decreasing
curve from the top and from the right. Observe that the axis-aligned
rectangle spanned by the lower left corner $s_i$ and any point $p\in
t_i$ is contained in the tile $t_i$. That means that every maximum-area
axis-aligned rectangle in the tile is incident to the lower left
corner $s_i$. We can now describe our second strategy for Bob.
\begin{quote}
{\sc TilePacking.} Compute the tiling $U=\bigcup_{i=1}^n t_i$. For
$i=1,\ldots , n$, independently, choose an axis-aligned rectangle
$r_i\subseteq t_i$ of maximum area.
\end{quote}
By the above observation, the lower left corner of $r_i$ is the
lower left corner of the tile $t_i$, which is $s_i\in S$.

We now show that {\sc GreedyPacking} always covers a greater
or equal area than {\sc TilePacking}.

\begin{lemma}\label{lem:greedy}
For each point $s_i\in S$, {\sc GreedyPacking} chooses a rectangle of
greater or equal area than {\sc TilePacking}.
\end{lemma}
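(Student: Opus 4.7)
The plan is to show that the rectangle $r_i^T$ chosen by {\sc TilePacking} for the point $s_i$ is itself a legal candidate for {\sc GreedyPacking} at step $i$. Since {\sc GreedyPacking} then picks a rectangle of maximum area among all legal candidates, its choice at $s_i$ can only be at least as large. Because the points are processed in the same order by both algorithms, proving this for each $i$ establishes the lemma.

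To carry this out, I first observe that $r_i^T \subseteq t_i \subseteq [0,1]^2$ and that its lower left corner is $s_i$, so the only nontrivial requirement to verify is that $r_i^T$ has interior disjoint from every previously chosen greedy rectangle $r_j^G$ with $j<i$. For this I would argue by contradiction: suppose some point $p$ lies in $\mathrm{int}(r_i^T)\cap \mathrm{int}(r_j^G)$ for some $j<i$. Since $r_j^G$ is anchored at $s_j$, every interior point of $r_j^G$ strictly dominates $s_j$; in particular $s_j \preceq p$. On the other hand $p \in r_i^T \subseteq t_i$, and the defining property of $t_i$ forces $s_j \not\preceq p$ for every $j<i$. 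The two conclusions are incompatible, so the interiors are disjoint, as required.

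The argument is essentially a direct consequence of the definition of the tiles as the dominance cells built from $s_1,\ldots,s_n$, so there is no serious obstacle: the whole content of the lemma is the observation that the tile $t_i$ already accounts for \emph{all} anchors processed before $s_i$, and therefore any rectangle living inside $t_i$ is automatically compatible with the $i-1$ rectangles already placed by the greedy procedure. The only point that needs a small amount of care is using \emph{strict} domination for points in the interior of $r_j^G$, so as to conflict with the set-theoretic condition $s_j\not\preceq p$ in the definition of $t_i$; this is what makes the interior-disjointness clean rather than merely boundary-disjointness.
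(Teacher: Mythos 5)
Your proposal is correct and uses essentially the same argument as the paper: the key observation in both is that every point of the greedy rectangle $r(s_j)$ dominates $s_j$, while the defining condition of the tile $t_i$ excludes all points dominating any $s_j$ with $j<i$, so the tile (and hence $r_i^T\subseteq t_i$) cannot meet any earlier greedy rectangle. The paper states this as full disjointness of $r(s_j)$ and $t_i$; you phrase it via interiors and strict domination, which is a slightly more careful formulation but not a different route.
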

\begin{proof}
The rectangles chosen by the greedy tiling for $s_j$, $j<i$, are all
disjoint from the tile $t_i$, because every point in
$r(s_j)$ dominates $s_j$. Hence, {\sc GreedyPacking} could choose
any maximum-area axis-aligned rectangle from tile $t_i$, but it may
choose a larger rectangle (such as $r(s_8)$ in Fig.~\ref{fig:2}).
\end{proof}

\paragraph{Remark.} It is worth noting that {\sc GreedyPacking} cannot
give a better worst-case constant than {\sc TilePacking}.
If $S$ is in "general position" (that is, no two points lie on the same 
sweep-line), we construct a set $S'$, where $|S'| \leq 3n-2$ as follows.
Choose a sufficiently small $\eps>0$. 
Then for each point $s_i=(x_i,y_i)$ in the interior of $U$, we add two
nearby points $(x_i-\eps, y_i)$ and $(x_i,y_i-\eps)$ 
(below the sweep-line incident to $s_i$, 
one to the left of $s_i$ and one below $s_i$). 
Observe that on input $S'$, {\sc GreedyPacking} and {\sc TilePacking}
give the same set of rectangles. Moreover, the total area covered by 
{\sc TilePacking} with $S'$ and the total area covered by {\sc TilePacking} 
with $S$ will differ from each other by an arbitrarily small amount,
\ie, by at most $2n\eps$.

\section{Analysis of {\sc TilePacking}}\label{sec:analysis}

Formally, define $\rho$ as the worst-case performance of {\sc TilePacking}.
In this section, we show that {\sc TilePacking} chooses a set of
rectangles of total area $\rho \geq 0.09121$. In our analysis,
we will use two variables, $\beta\geq 5$ and $0<\lambda<1$.

Let $r_i\subseteq t_i$ be the axis-aligned rectangle of maximum area
in tile $t_i$, whose lower left corner is $s_i$, and let
$R=\{r_i:i=1,\ldots , n\}$ be this set of rectangles. If
$\area(r_i)\geq 0.09121 \cdot \area(t_i)$ for every $i$, then our proof is
complete. However, the ratio $\area(r_i)/\area(t_i)$ may be arbitrarily small
because the tiles can be arbitrary staircase polygons.

For $\beta \geq 5$, we say that tile $t_i$ is a {\em $\beta$-tile}
if $\area(r_i)<\frac{1}{\beta}\ \area(t_i)$. We will give an upper bound
$F(\beta,\lambda)$ on the total area of $\beta$-tiles for every $\beta \geq 5$
and $0<\lambda<1$. This immediately implies that, for every $\beta
\geq 5$, the complement of all $\beta$-tiles cover at least
$1-F(\beta,\lambda)$ area, and the total area of all rectangles in $R$
is
\begin{equation}\label{eq:total}
\sum_{i=1}^n \area(r_i)
\geq  \rho
\geq \frac{1-F(\beta,\lambda)}{\beta}.
\end{equation}

In Section~\ref{ssec:onetile} we study the properties of
individual $\beta$-tiles, and in Section~\ref{ssec:manytiles} we prove an
upper bound on the total area of $\beta$-tiles (for every $\beta \geq 5$),
which already gives a preliminary bound $\rho \geq 0.07229$, using \eqref{eq:total}.
By integrating over $\beta$, we improve this bound to $\rho \geq 0.09121$
in Section~\ref{ssec:int}. Finally, Section~\ref{ssec:runtikme} shows that {\sc TilePacking}
runs in $O(n\log n)$ time.

\subsection{Properties of a $\beta$-tile\label{ssec:onetile}}

Let us introduce some notation for describing a single tile $t_i$
(Fig.~\ref{fig:3}(a)). It is bounded from below by a horizontal side,
denoted $a_i$, and from the left by a vertical side, denoted
$b_i$. The {\em width} (resp., {\em height}) of $t_i$ is the length of
$a_i$ (resp., $b_i$), denoted by $|a_i|$ (resp., $|b_i|$). We show
next that a $\beta$-tile $t_i$ has a much smaller area than its
bounding box (recall that $\beta$-tiles are defined for $\beta \geq 5$).

\begin{lemma}\label{lem:sides}
Let $\beta \geq 1$ and $t$ be a staircase polygon of height $h$ and width $w$.
If the area of every axis-aligned rectangle contained in $t$ is less
than $\area(t)/\beta$, then
\begin{equation}\label{eq:sides}
\area(t)< \frac{\beta}{e^{\beta-1}} \cdot hw,
\end{equation}
and this bound is the best possible.
\end{lemma}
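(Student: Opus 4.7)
The plan is to parametrize the upper-right boundary of $t$ as a step function and reduce the claim to a one-variable optimization, then exhibit an explicit near-extremal family to show the constant cannot be improved.

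For the upper bound I would place $t$ so its lower-left corner is at the origin, with horizontal bottom side $[0,w]\times\{0\}$ and vertical left side $\{0\}\times[0,h]$, and let $f:[0,w]\to[0,h]$ be the non-increasing, right-continuous step function whose graph (together with the two axis-parallel sides) bounds $t$, so $A:=\area(t)=\int_0^w f(x)\,dx$. The key observation is that for every $x\in(0,w]$, the axis-aligned rectangle with opposite corners $(0,0)$ and $(x,f(x^-))$ lies in $\overline{t}$, so by hypothesis its area $xf(x^-)$ is strictly less than $A/\beta$; hence $f(x)<A/(\beta x)$ for all $x>0$, and combined with $f\le h$ this gives $f(x)\le\min(h,A/(\beta x))$ on $(0,w]$, with strict inequality wherever $A/(\beta x)<h$.

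I would then integrate this pointwise bound. The two branches of the envelope cross at $x^*:=A/(\beta h)$, and for $\beta>1$ we have $x^*<w$ (otherwise $A\ge \beta hw$ would contradict the trivial $A\le hw$). Splitting the integral at $x^*$,
$$
A=\int_0^w f(x)\,dx < \int_0^{x^*}h\,dx+\int_{x^*}^w\frac{A}{\beta x}\,dx=\frac{A}{\beta}\Bigl(1+\ln\tfrac{\beta hw}{A}\Bigr),
$$
where strictness comes from $f(x)<A/(\beta x)$ on the positive-measure set $(x^*,w]$. Rearranging gives $\beta-1<\ln(\beta hw/A)$, i.e.\ $A<\beta hw/e^{\beta-1}$, which is \eqref{eq:sides}. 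The boundary case $\beta=1$ is immediate, since the strict hypothesis forces $t$ to be properly contained in its bounding box.

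For tightness I would construct an explicit near-extremal family. Set $c=hw/e^{\beta-1}$ and, for $\delta\in(0,1)$, consider the continuous region
$$
R_\delta=\{(x,y):0\le x\le w,\ 0\le y\le\min(h,(1-\delta)c/x)\}.
$$
A direct calculation gives $\area(R_\delta)=(1-\delta)c(\beta-\ln(1-\delta))$, which tends to $\beta hw/e^{\beta-1}$ as $\delta\to 0$, while any axis-aligned rectangle $[x_1,x_2]\times[y_1,y_2]\subseteq R_\delta$ has area at most $x_2y_2\le(1-\delta)c<\area(R_\delta)/\beta$. Approximating $R_\delta$ from inside by a staircase polygon $t_{n,\delta}$ with $n$ steps yields $\area(t_{n,\delta})\to\area(R_\delta)$ as $n\to\infty$, and for $n$ large every axis-aligned rectangle in $t_{n,\delta}$ still has area strictly less than $\area(t_{n,\delta})/\beta$. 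The main obstacle I anticipate is precisely this double limit $\delta\to 0,\ n\to\infty$: one must shrink the hyperbolic envelope by the factor $(1-\delta)$ before discretizing so that the strict hypothesis $\area(r)<\area(t_{n,\delta})/\beta$ is preserved throughout, rather than merely $\area(r)\le c$.
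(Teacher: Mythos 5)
Your proof is correct and takes essentially the same approach as the paper: both place $t$ with its lower-left corner at the origin, observe that the staircase boundary must lie strictly below the hyperbola $y=\area(t)/(\beta x)$ because the rectangle spanned by the origin and any boundary point is contained in $t$, and then integrate this envelope over $[0,w]\times[0,h]$ to obtain $\beta-1<\ln(\beta hw/\area(t))$. Your tightness construction (a staircase inscribed just below the hyperbola) is likewise the one the paper uses; the only difference is that you carry out the $(1-\delta)$-shrinkage and the double limit explicitly where the paper merely gestures at the approximation.
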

\begin{proof}
Assume, by translating $t$ if necessary, that the lower left corner of
$t$ is the origin. Then the bounding box of $t$ is
$[0,w]\times [0,h]$. Let $\area(t_i)=\beta u^2$, for some $u>0$. Then
all vertices of $t$ lie strictly below the hyperbola arc $f(x)=u^2/x$,
for $0<x$. See Fig.~\ref{fig:3}(b).
The area of the part of $[0,w]\times [0,h]$ below the curve $f(x)=u^2/x$ is
$$u^2+\int_{u^2/h}^{w}\frac{u^2}{x}\ {\rm d}x
= u^2+\big[u^2\ln x\big]_{u^2/h}^{w}
= u^2(1+\ln w- \ln (u^2/h))
= u^2(1+\ln(hw)-\ln u^2).$$
We have shown that $\area(t)=\beta u^2 < u^2(1+\ln (hw)-\ln u^2)$, or
$\beta<1+ \ln (hw)-\ln u^2$.
Rearranging this inequality yields $e^{\beta-1}u^2<hw$, which implies
\eqref{eq:sides}, as required.

If we approximate the shaded area in Fig.~\ref{fig:3}(b) with a
staircase polygon $t$ of height $h$ and width $w$ that lies strictly
below the hyperbola, then the area of every axis-aligned rectangle
contained in $t$ is less than $u^2$, and $\area(t)$ can be arbitrarily
close to $\frac{\beta}{e^{\beta-1}}\cdot hw$.
\end{proof}

\paragraph{Sectors and tips.}
Fix $\beta \geq 5$ and consider a $\beta$-tile $t_i$. Decompose $t_i$
into rectangular {\em vertical sectors} by vertical lines passing though
its vertices; see Fig.~\ref{fig:3}(c). Each sector is part of some maximum-area
axis-aligned rectangle in $t_i$. Hence the area of each sector is less than
$\area(t_i)/\beta$, where near equality is possible for the leftmost
sector. Similarly, we can decompose $t_i$ into rectangular
{\em horizontal sectors} by horizontal lines passing through its
vertices, and the area of each sector is less than $\area(t_i)/\beta$.
\begin{figure}[htbp]
\centering
\includegraphics[width=.95\textwidth]{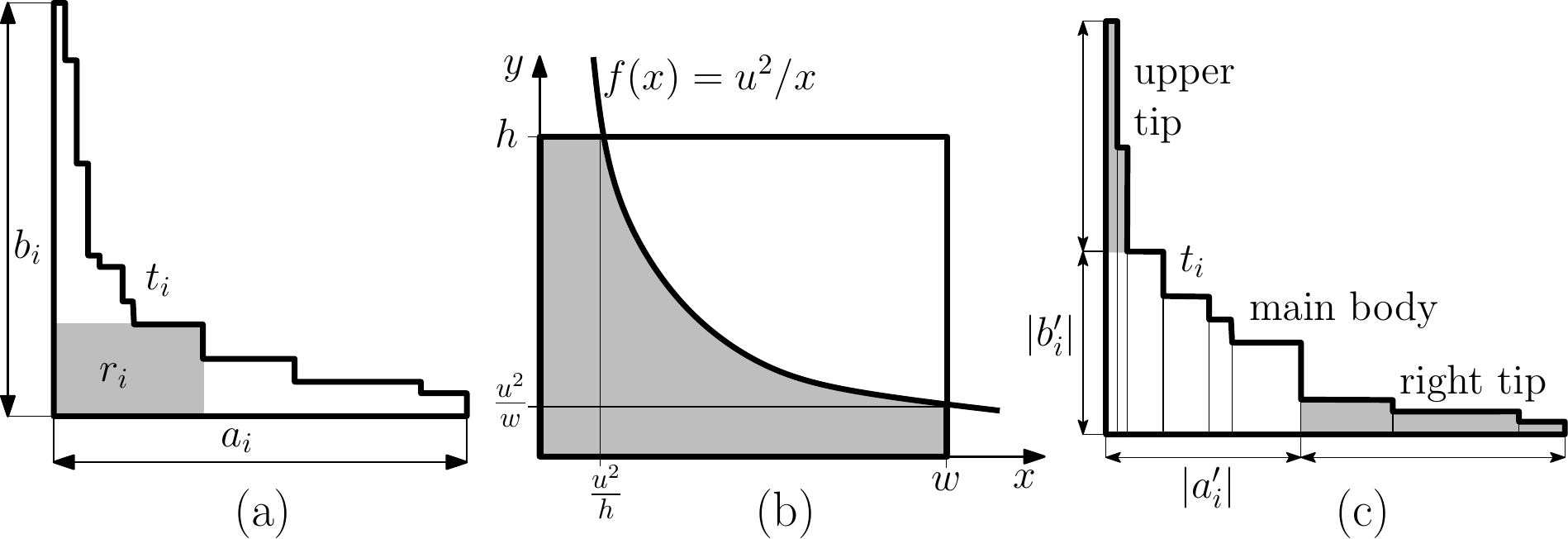}
%\vspace{-.5\baselineskip}
\caption{(a) A $\beta$-tile $t_i$ of width $|a_i|$ and height $|b_i|$.
(b) The portion of the rectangle $[0,w] \times [0,h]$ below the
  hyperbola arc $f(x)=u^2/x$.
(c) The decomposition of $t_i$ into vertical sectors. The tips of
  $t_i$ are shaded.}
\label{fig:3}
\end{figure}

Decompose each $\beta$-tile $t_i$ into three parts, called {\em right tip},
{\em upper tip}, and {\em main body}, as follows. The right
tip of $t_i$ is cut off from $t_i$ by the right-most vertical line
that passes through a vertex of $t_i$ such that the area of the right
part is at least $\area(t_i)/\beta$. 
Similarly, the upper tip of $t_i$ is cut off from
$t_i$ by the upper-most horizontal line that passes through a vertex
of $t_i$ such that the area of the part above is at least
$\area(t_i)/\beta$. The remaining part, denoted by $t_i'$,
is the main body of $t_i$. All three parts are staircase
polygons. Both the right and the upper tips are unions of some sectors
of $t_i$. Since the area of each sector is less than $\area(t_i)/\beta$,
the area of each tip is at least $\frac{1}{\beta}\area(t_i)$ but less than
$\frac{2}{\beta}\area(t_i)$. In particular, since $\beta\geq 5$, the right
tip of $t_i$ is disjoint from the upper tip of $t_i$.

Let $a_i'$ and $b_i'$, respectively, be the lower and left side of
$t_i'$. Note that the topmost horizontal side and the rightmost
vertical side of $t_i'$ each contains some point from $S$,
because each contains a reflex vertex of the original tile $t_i$.

\begin{lemma}\label{lem:tip}
The width of the right tip of $t_i$ is at least $|a_i'|$, hence $2|a_i'|\leq |a_i|$.
Similarly, the height of the upper tip of $t_i$ is at least $|b_i'|$,
hence $2|b_i'|\leq |b_i|$.
\end{lemma}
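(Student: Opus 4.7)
My plan is to sandwich the quantity $|a_i'|\cdot Y^*$, where $Y^*$ is the height of the leftmost vertical sector of the right tip, between two bounds: one coming from the tip's defining area condition, the other from the $\beta$-tile property.

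Place coordinates so that $s_i$ is at the origin; then $t_i$ sits in $[0,|a_i|]\times[0,|b_i|]$, and the vertical line $x=|a_i'|$ cuts off the right tip by the definition of the main body. First I would observe that, because the staircase heights of $t_i$ strictly decrease from left to right, $Y^*$ is in fact the \emph{maximum} height anywhere in the right tip. Consequently the right tip is contained in the axis-aligned rectangle of width $w_R:=|a_i|-|a_i'|$ and height $Y^*$, so
$$w_R \cdot Y^* \;\geq\; \area(\text{right tip}) \;\geq\; \area(t_i)/\beta,$$
the last inequality being the defining property of the tip.

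Next I would exhibit an axis-aligned rectangle of the opposite shape inside the main body. Since every vertical sector of $t_i$ lying strictly to the left of the cut $x=|a_i'|$ has height strictly greater than $Y^*$, the rectangle $[0,|a_i'|]\times[0,Y^*]$ lies entirely in $t_i$. But $t_i$ is a $\beta$-tile, so every axis-aligned rectangle it contains has area strictly less than $\area(t_i)/\beta$, which forces $|a_i'|\cdot Y^*<\area(t_i)/\beta$. Chaining the two bounds gives $|a_i'|\cdot Y^*<w_R\cdot Y^*$, and cancelling $Y^*>0$ yields $|a_i'|<w_R$, whence $2|a_i'|<|a_i'|+w_R=|a_i|$. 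The upper-tip statement is the mirror assertion, obtained by interchanging the roles of the horizontal and vertical axes.

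I do not foresee a major obstacle: once $Y^*$ is identified as the correct common height for the two comparison rectangles, both inequalities follow immediately from the definitions. The only small point requiring care is verifying the containment $[0,|a_i'|]\times[0,Y^*]\subseteq t_i$, which reduces to the observation that, by strict monotonicity along the staircase, every sector to the left of the tip-defining cut is taller than the tip's leftmost sector, so $Y^*$ is the minimum height of the main body over $[0,|a_i'|]$.
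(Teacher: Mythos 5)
Your proof is correct and takes essentially the same approach as the paper's: both sandwich a rectangle of width $|a_i'|$ anchored at $s_i$ between the $\beta$-tile bound $\area(t_i)/\beta$ on one side and the right tip's defining area threshold (via its bounding box of width $w_R$) on the other. The only cosmetic difference is that the paper uses the taller maximal rectangle whose lower side is $a_i'$ and then separately compares its height with that of the tip, whereas you take the comparison rectangle to have height exactly $Y^*$, so that $Y^*$ cancels immediately.
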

\begin{proof}
By symmetry, it is enough to prove the first claim. Let $r'$ be the
maximum-area axis-aligned rectangle in $t_i$ whose lower side is $a_i'$.
Since $t_i$ is a $\beta$-tile, the area of $r'$ is less than
$\frac{1}{\beta}\area(t_i)$.  Recall that the area of each tip is
at least $\frac{1}{\beta}\area(t_i)$, thus $\area(r')$ is less than
the area of the right tip of $t_i$, and so $\area(r')$ is less than
the area of the bounding box of the right tip of $t_i$. However, the
height of $r'$ is strictly greater than the height of the right tip
(and its bounding box). Therefore, the width of $r'$, which is
$|a_i'|$, is less than the width of the right tip of $t_i$.
Thus $|a_i'| \leq |a_i|- |a_i'|$, or $2|a_i'| \leq |a_i|$, as required.
\end{proof}

Recall that the areas of the right and upper tips of $t_i$ are each less than
$\frac{2}{\beta}\area(t_i)$. Hence $\area(t_i) <\frac{\beta}{\beta-4} \area(t_i')$.
Since $t_i$ is a $\beta$-tile and $t_i' \subset t_i$,
the area of every axis-aligned rectangle contained
in $t_i'$ is less than $\area(t_i)/\beta< \area(t_i')/(\beta-4)$.
Applying Lemma~\ref{lem:sides} for the main body $t_i'$ yields
\begin{equation}\label{eq:mainbody}
\area(t_i)
<\frac{\beta}{\beta-4} \cdot \area(t_i')
<\frac{\beta}{\beta-4} \cdot \frac{\beta-4}{e^{\beta-5}} \cdot |a_i'|\cdot |b_i'|
= \frac{\beta}{e^{\beta-5}} \cdot |a_i'|\cdot |b_i'|.
\end{equation}

\paragraph{Tall and wide tiles.}
We distinguish two types of $\beta$-tiles based on the height and width of
their main body. A $\beta$-tile $t_i$ is {\em tall} if $|a_i'|<|b_i'|$;
and it is {\em wide} if $|a_i'|\geq |b_i'|$. For wide $\beta$-tiles,
we have $\max(|a_i'|,|b_i'|)=|a_i'|$, and \eqref{eq:mainbody} implies
\begin{equation}\label{eq:b5}
\area(t_i)< \frac{\beta}{e^{\beta-5}} |a_i'|^2.
\end{equation}
Similarly, if a $\beta$-tile $t_i$ is tall, then 
$\area(t_i)< \frac{\beta}{e^{\beta-5}} |b_i'|^2$.

\subsection{Upper bound on the total area of $\beta$-tiles\label{ssec:manytiles}}

In this section we give an upper bound $F(\beta,\lambda)$
(see equation~\eqref{eq:F} further bellow) on the total area of all $\beta$-tiles
for every $\beta\geq 5$ and $0<\lambda<1$. It is enough to  bound the
total area of wide $\beta$-tiles by $\frac{1}{2}F(\beta,\lambda)$. By symmetry, the same
upper bound holds for the total area of tall $\beta$-tiles. Let $W\subset \{1,\ldots n\}$
be the set of indices of the wide $\beta$-tiles.

To begin, for every tile $t_i$ we define two adjacent triangles. Let $\Delta_i$ be the
isosceles right triangle bounded by $a_i$, the line of slope $-1$ through $s_i$, and
a vertical line through the right endpoint of $a_i$ (see Fig.~\ref{fig:4}(a)).
Similarly, let $\Gamma_i$ be isosceles right triangle adjacent to
$b_i$ that lies left of $t_i$. The two triangles $\Delta_i$ and $\Gamma_i$
are not part of the tiling $\{t_i:i=1\ldots n\}$, and they may intersect several tiles.
A key fact is that these two triangles are empty of points from $S$ in
their interior, regardless whether the tile $t_i$ is a $\beta$-tile or not.

\begin{lemma}\label{lem:empty}
For every $i=1,\ldots , n$, the interior of $\Delta_i$
(resp., $\Gamma_i$) is disjoint from $S$.
\end{lemma}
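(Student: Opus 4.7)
The plan is a direct proof by contradiction; the argument for $\Gamma_i$ is obtained by symmetry (swapping horizontal and vertical directions, and using the left side $b_i$ in place of the bottom side $a_i$), so I focus on $\Delta_i$.

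Assume that some $s_m = (x_m, y_m) \in S$ lies in the interior of $\Delta_i$. The triangle $\Delta_i$ has vertices $(x_i, y_i)$, $(x_i + |a_i|, y_i)$, and $(x_i + |a_i|, y_i - |a_i|)$, with hypotenuse on the line $x + y = x_i + y_i$. Reading off the strict inequalities that define its interior yields
$$x_i < x_m < x_i + |a_i|, \qquad y_m < y_i, \qquad x_m + y_m > x_i + y_i.$$
Since $S$ is ordered by nonincreasing $x + y$, the last inequality forces $m < i$ (and in particular $s_m \neq s_i$).

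Next, I would use $s_m$ to obstruct the tile $t_i$. Because $m < i$, the definition of $t_i$ excludes every point $p$ with $s_m \preceq p$, that is, every $p$ in the closed upper-right quadrant $\{(x,y): x \geq x_m,\ y \geq y_m\}$ of $s_m$. Any point of the form $(x, y_i)$ with $x > x_m$ lies in the open interior of this quadrant (using $y_i > y_m$), and is therefore not even in the closure of $t_i$, since the closure of a set disjoint from an open set remains disjoint from that open set. But the right endpoint of the bottom side $a_i$ lies on $y = y_i$ and is contained in the closure of $t_i$, so its $x$-coordinate is at most $x_m$. This gives $|a_i| \leq x_m - x_i$, equivalently $x_i + |a_i| \leq x_m$, directly contradicting the strict inequality $x_m < x_i + |a_i|$ derived above.

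The only delicate point is keeping track of strict versus weak inequalities: the openness of the interior of $\Delta_i$ is used in two places at once — first to produce a strict $x_m + y_m > x_i + y_i$ (so that $m < i$ rather than $m = i$), and second to produce a strict $x_m < x_i + |a_i|$ (so that the width bound is genuinely violated). Both come for free from the interior hypothesis, so the argument is essentially mechanical once the tile definition and the sweep-line order are unpacked.
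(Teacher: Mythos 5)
Your proof is correct, but the mechanism is different from the paper's. The paper also argues by contradiction, but it selects the \emph{first} point $s_j$ of $S$ (in processing order) that lies in the interior of $\Delta_i$ and then reasons geometrically about the previously constructed tile $t_j$: since $s_j$ sits strictly below $a_i$ with $x$-coordinate strictly between the endpoints of $a_i$, the left side $b_j$ of $t_j$ (the vertical ray shot upward from $s_j$) would have to cut through the segment $a_i$, which is impossible. You instead never look at the other tile $t_m$ at all; you go straight to the set-theoretic definition of $t_i$ and observe that, because $m<i$, the open upper-right quadrant at $s_m$ is disjoint from $t_i$, hence from $\overline{t_i}$, which pins the right endpoint of $a_i$ at $x$-coordinate at most $x_m$ and contradicts $x_m < x_i + |a_i|$. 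The two approaches buy slightly different things: the paper's version is quick to state but implicitly relies on the choice of the \emph{first} such $s_j$ to guarantee that $b_j$ actually extends high enough to hit $a_i$; your version is a bit more verbose but fully elementary, works with an arbitrary $s_m$ in the interior, and derives the contradiction directly from the tile definition without invoking any claim about how the boundary of a previously constructed tile is shaped.
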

\begin{proof}
Suppose to the contrary, that there exist points in $S$ in the interior of
$\Delta_i$, and let $s_j$ be the first such point processed by the algorithm. 
Then $s_j$ is processed before $s_i$, and so the left side $b_j$ of the tile $t_j$
would cut through the horizontal segment $a_i$, which is a
contradiction. Similarly, if $s_j$ lies in the interior of $\Gamma_i$,
then the lower side $a_j$ would cut through the left side $b_i$.
\end{proof}

\begin{figure}[htbp]
\centering
\includegraphics[width=.95\textwidth]{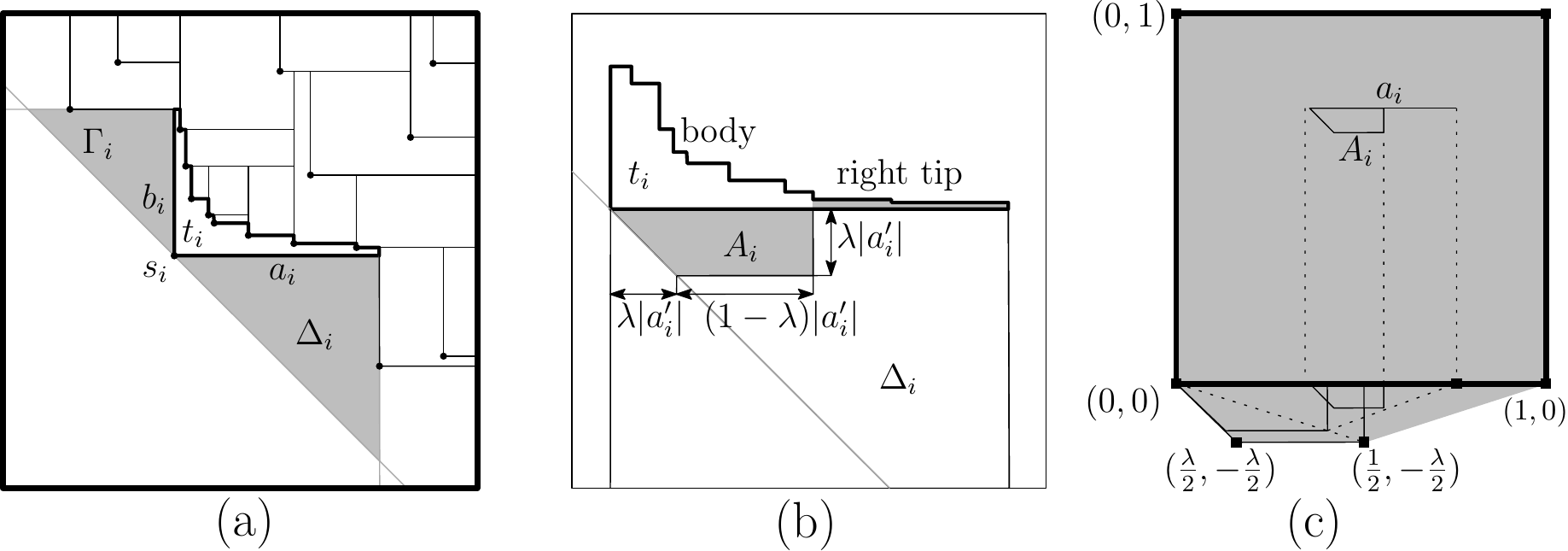}
%\vspace{-.5\baselineskip}
\caption{(a) Tiles $t_1,\ldots, t_i$, and the triangles $\Delta_i$ and $\Gamma_i$.
(b) The trapezoid $A_i\subset \Delta_i$.
(c) Every trapezoid $A_i$ is contained in the shaded (hexagonal) region.}
\label{fig:4}
\end{figure}

We charge the area of each wide $\beta$-tile $t_i$ to the trapezoid $A_i\subset
\Delta_i$ defined below. Let $A_i$ be the set of points in $\Delta_i$
that lie vertically below the segment $a_i'$ at distance at most $\lambda |a_i'|$ from it.
See Fig.~\ref{fig:4}(c). Using Inequality~\eqref{eq:b5}, the area of $A_i$
can be bounded from below as follows in terms of the area of $t_i$:
\begin{equation}\label{eq:trapezoid}
\area(A_i)
=\frac{|a_i'|+(1-\lambda)|a_i'|}{2} \cdot \lambda |a_i'|
= \frac{\lambda(2-\lambda)}{2} \cdot |a_i'|^2
> \frac{\lambda(2-\lambda) \cdot e^{\beta-5}}{2\beta} \cdot \area(t_i).
\end{equation}

The trapezoids $A_i$, $i\in W$, are homothetic copies of each other,
and their area depends only on $|a_i'|$. Note that the triangles
$\Delta_i$ (and also the trapezoids $A_i$) may extend beyond the boundary of
$U=[0,1]^2$ ({\em e.g.}, in Fig.~\ref{fig:4}(b), $\Delta_i$ extends below $U$).
We show that all trapezoids $A_i$, $i\in W$, lie in a polygon whose area
is at most $1+\lambda(3-\lambda)/8$.

\begin{lemma}\label{lem:AI}
Every trapezoid $A_i$, $i\in W$, lies in a polygon of area $1+\lambda(3-\lambda)/8$.
\end{lemma}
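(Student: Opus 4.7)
The plan is to exhibit a concrete hexagon $H$ of area $1+\lambda(3-\lambda)/8$ that contains every $A_i$, and to verify the containment by a vertex check, using convexity of both $A_i$ and $H$.

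The natural candidate is $H=U\cup T$, where $T$ is the trapezoid below $y=0$ with vertices $(0,0)$, $(\lambda/2,-\lambda/2)$, $(1/2,-\lambda/2)$, and $(1,0)$. Its horizontal parallel sides have lengths $1$ and $(1-\lambda)/2$ and its height is $\lambda/2$, so a direct calculation gives $\area(T)=\lambda(3-\lambda)/8$; hence $\area(H)=1+\lambda(3-\lambda)/8$, and one verifies by inspection that $H$ is a convex hexagon, matching the shaded region of Figure~\ref{fig:4}(c).

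To establish $A_i\subseteq H$ for $i\in W$, I would use convexity of both to reduce to a check on the four vertices of $A_i$. Writing $w=|a_i'|$, these vertices are $(x_i,y_i)$, $(x_i+w,y_i)$, $(x_i+w,y_i-\lambda w)$, and $(x_i+\lambda w,y_i-\lambda w)$. Combining Lemma~\ref{lem:tip} with $t_i\subseteq U$ yields the key inequality $x_i+2w\le 1$, which at once places the two upper vertices in $U$. The two lower vertices lie in $U$ when $y_i\ge\lambda w$; otherwise they lie strictly below $y=0$, and must be checked against the four linear inequalities defining $T$, namely $v\le 0$, $v\ge -\lambda/2$, $v\ge -u$, and $v\ge \lambda(u-1)$. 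The first three are quick consequences of $y_i\ge 0$, $w\le (1-x_i)/2$, and the nonnegativity of $x_i+y_i$. The main obstacle is the last inequality $v\ge\lambda(u-1)$ at the bottom-right vertex $(x_i+w,y_i-\lambda w)$: after simplification it becomes $y_i+\lambda(1-x_i-2w)\ge 0$, which is exactly the point at which the full strength of $x_i+2w\le 1$ from Lemma~\ref{lem:tip} is needed. Once this is in hand, convexity upgrades the vertex check to $A_i\subseteq H$, and the lemma follows.
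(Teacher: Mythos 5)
Your proof is correct, and it identifies exactly the same hexagon as the paper (the union of $U$ with the trapezoid $T$ of area $\lambda(3-\lambda)/8$ below the $x$-axis, matching Figure~\ref{fig:4}(c)), and hinges on the same key facts, namely $2|a_i'|\leq|a_i|$ from Lemma~\ref{lem:tip} and $a_i\subset U$, combined into $x_i+2w\leq 1$. Where you differ is the method of verifying $A_i\subseteq H$: the paper translates $a_i$ to the $x$-axis, then applies two dilations to normalize $a_i$ to $[0,1]$, and asserts (``observe that\ldots'') that $A_i$ stays inside the hexagon throughout these normalizing maps, leaving the reader to check that the maps are indeed containment-preserving; you instead verify convexity of $H$, reduce to a four-vertex check, and test each lower vertex against the four linear inequalities defining $T$, correctly isolating the bottom-right vertex against $v\geq\lambda(u-1)$ as the binding constraint where the full strength of $x_i+2w\leq 1$ is needed. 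Your route is more computational and more self-contained, spelling out what the paper compresses into a single ``observe''; the paper's transformation argument is shorter but implicitly relies on the reader confirming that vertical translation toward the $x$-axis and the two contractive dilations preserve containment in the hexagon. Both are sound and both prove the same statement with the same bound.
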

\begin{proof}
Every trapezoid $A_i$ lies vertically below a segment $a_i'$, which is
part of the lower side of tile $t_i$. Therefore, $A_i$ cannot extend beyond
the left, right, and upper sides of the unit square $U$. Moreover, we show that $A_i$
is contained in the shaded polygon in Fig.~\ref{fig:4}(c).

Consider a trapezoid $A_i$, and the corresponding lower side $a_i$ of a
$\beta$-tile $t_i$, $i\in W$. Translate them vertically down until $a_i$
lies on the $x$-axis. Then apply a dilation centered at the right endpoint
of $a_i$ (now on the $x$-axis), such that the left endpoint of $a_i$
becomes $(0,0)$. Finally, apply a dilation centered at $(0,0)$ such that
the right endpoint of $a_i$ becomes $(1,0)$. Observe that through all
three transformations, the trapezoid $A_i$ remains in the shaded polygon.
The shaded polygon is the union of $U$ and a trapezoid of area
$\frac{1}{2}(1+\frac{1-\lambda}{2}) \frac{\lambda}{2}
=\lambda(3-\lambda)/8$.
\end{proof}

\paragraph{The case of pairwise disjoint trapezoids.}
If the trapezoids $A_i$, for all $i\in W$, are pairwise disjoint,
then we can deduce an upper bound for the total area of wide $\beta$-tiles:
By Lemma~\ref{lem:AI}, we have
$$\sum_{i\in W} \area(A_i) \leq 1+\frac{\lambda(3-\lambda)}{8} =
\frac{8+3\lambda-\lambda^2}{8}.$$
Using Inequalities \eqref{eq:b5} and \eqref{eq:trapezoid}, this implies
\begin{equation}\label{eq:no-overlap}
\sum_{i\in W}\area(t_i)
< \frac{8+3\lambda-\lambda^2}{8} \cdot
\frac{2\beta}{\lambda(2-\lambda) \cdot e^{\beta-5}}
=\frac{(8+3\lambda-\lambda^2)}{4\lambda(2-\lambda)}  \cdot \frac{\beta}{e^{\beta-5}}.
\end{equation}

\paragraph{The general case of overlapping trapezoids.}
However, it is possible that the trapezoids $A_i$, $i\in W$, are not
disjoint. To take care of this possibility, we set up a charging
scheme, in which we choose a set of ``large'' pairwise disjoint
trapezoids. For every trapezoid $A_i$, $i\in W$, denote by $\ell_i$ the 
supporting line of $a_i$. We say that $A_i$ is {\em above} $A_j$ (and $A_j$ is
{\em below} $A_i$) if $A_i$ and $A_j$, $i\neq j$ and $\ell_i$ is 
above $\ell_j$. We next show that if $A_i$ and $A_j$ overlap, 
then the trapezoid below the other is significantly larger.

\begin{lemma}\label{lem:exp}
Assume that $A_i\cap A_j\neq \emptyset$, for some $i,j\in W$, 
$i\neq j$; and $A_i$ is above $A_j$. Then $\ell_j \cap \Delta_i \subseteq a_j'$ 
and $(2-\lambda)|a_i'|\leq |a_j'|$.
\end{lemma}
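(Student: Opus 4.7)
I plan to reduce the length bound $(2-\lambda)|a_i'|\leq|a_j'|$ to the set inclusion $\ell_j\cap\Delta_i\subseteq a_j'$. Put $d:=Y_i-Y_j>0$, with $s_i=(X_i,Y_i)$ and $s_j=(X_j,Y_j)$, so that $\ell_j\cap\Delta_i$ is the segment $\{(x,Y_j):X_i+d\leq x\leq X_i+|a_i|\}$ and $a_j'=\{(x,Y_j):X_j\leq x\leq X_j+|a_j'|\}$. Two elementary consequences drop out of $A_i\cap A_j\neq\emptyset$: the vertical overlap forces $d\leq\lambda|a_i'|$, and the horizontal overlap at height $Y_j$ (where $A_i$'s left edge sits at $X_i+d$) forces $X_j+|a_j'|\geq X_i+d$. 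Lemma~\ref{lem:empty} applied to $s_j$, combined with $Y_j<Y_i$ and $X_j<X_i+|a_i|$ (also from the overlap), forces $X_j\leq X_i+d$. Consequently, once $X_j+|a_j'|\geq X_i+|a_i|$ is established the inclusion is immediate, and
\[
|a_j'|\;\geq\;(X_i+|a_i|)-X_j\;\geq\;|a_i|-d\;\geq\;2|a_i'|-\lambda|a_i'|\;=\;(2-\lambda)|a_i'|
\]
follows from Lemma~\ref{lem:tip}.

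The main step is therefore $X_j+|a_j'|\geq X_i+|a_i|$, and I would attack it by contradiction. Suppose $X_j+|a_j'|<X_i+|a_i|$. By the definition of the main body, the cut line $x=X_j+|a_j'|$ passes through a vertex of $t_j$; tracing $t_j$'s top-right staircase shows that this $x$-coordinate must coincide with the $x$-coordinate of some reflex vertex of the staircase, and every reflex vertex of the staircase is a Pareto-optimal point of $S$ in the upper-right of $s_j$ with index less than $j$. Hence there is $s_\ell\in S$ with $\ell<j$, $x_\ell=X_j+|a_j'|$, and $y_\ell>Y_j$.

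The core obstacle is to verify $s_\ell\in\mathrm{int}(\Delta_i)$, since this contradicts Lemma~\ref{lem:empty}. Three of the four defining strict inequalities come for free: $X_i<x_\ell$ and $x_\ell<X_i+|a_i|$ follow from $x_\ell\geq X_i+d>X_i$ and the contradiction assumption, while $x_\ell+y_\ell>X_i+Y_i$ follows by combining $x_\ell\geq X_i+d$ with the strict inequality $y_\ell>Y_i-d$. The subtle inequality is $y_\ell<Y_i$, which I plan to establish by ruling out two competing cases. If $y_\ell>Y_i$, then $s_\ell$ strictly dominates $s_i$; the sum comparison places $i<j$ in the sweep order (the boundary case $X_j=X_i+d$ would force $|a_i|\leq 0$, violating $i\in W$), so $(x_\ell,y_\ell)$ lies in $s_i$'s shadow and cannot belong to $\partial t_j$. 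If $y_\ell=Y_i$, then $s_\ell$'s shadow already blocks $a_i$ at $x=x_\ell$, forcing $|a_i|\leq x_\ell-X_i$ and contradicting $x_\ell<X_i+|a_i|$. Hence $y_\ell<Y_i$, $s_\ell\in\mathrm{int}(\Delta_i)$, and the contradiction closes.
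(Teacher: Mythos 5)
Your proof is correct, and it takes a genuinely different route from the paper's. The paper's proof is a short topological argument: since $a_j'$ must meet $A_i\subseteq\Delta_i$, and since both the left endpoint $s_j$ of $a_j'$ and the $S$-point on the right edge of $t_j'$ lie outside $\mathrm{int}(\Delta_i)$ by Lemma~\ref{lem:empty}, the segment $a_j'$ must ``traverse'' $\Delta_i$, whence $\ell_j\cap\Delta_i\subseteq a_j'$; the length bound then comes from adding the minimum cross-section of $A_i$ (namely $(1-\lambda)|a_i'|$) to the width of the right tip (at least $|a_i'|$). You instead reduce everything to the explicit inequality $X_j+|a_j'|\geq X_i+|a_i|$ and prove it by contradiction, placing the $S$-point $s_\ell$ on the right edge of $t_j'$ inside $\mathrm{int}(\Delta_i)$ after a case analysis on $y_\ell$ versus $Y_i$. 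What your version buys: it is more explicit about why the path through $s_\ell$ cannot escape $\Delta_i$ upward through $a_i$ (the cases $y_\ell>Y_i$ and $y_\ell=Y_i$), a point the paper's terse phrasing leaves implicit. The cost is that the argument is longer and coordinate-heavy, whereas the paper's version is essentially a two-line continuity argument.

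One small slip: in the tie-break case $X_j+Y_j=X_i+Y_i$ you assert that $j<i$ ``would force $|a_i|\leq 0$.'' What it actually forces is $|a_i|\leq X_j-X_i=d\leq\lambda|a_i'|$, which contradicts $|a_i|\geq 2|a_i'|>\lambda|a_i'|$ from Lemma~\ref{lem:tip} (using $|a_i'|>0$ and $\lambda<1$), not $|a_i|\leq 0$. The conclusion $i<j$ still holds, so this is a cosmetic error rather than a gap, but the stated intermediate bound is wrong.
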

\begin{proof}
Refer to Fig.~\ref{fig:5}(a). If $A_i\cap A_j\neq \emptyset$, and 
line $\ell_i$ is above line $\ell_j$, then the segment $a_j'$ has 
to intersect $A_i$. Note that the left endpoint of $a_j'$ is $s_j\in S$, and 
there is some point from $S$ on the rightmost edge of $t_j'$. 
By Lemma~\ref{lem:empty}, however, there is no point from $S$ in
the interior of $\Delta_i$. Therefore, $a_j'$ has to traverse both
$A_i$ and $\Delta_i$, hence $\ell_j \cap \Delta_i \subseteq a_j'$.

The minimum horizontal cross-section of $A_i$ is
$(1-\lambda)|a_i'|$, and the width of the right tip of $t_i$ is at
least $|a_i'|$ by Lemma~\ref{lem:tip}. It follows that 
$|a_j'| \geq |\ell_j \cap \Delta_i| \geq (1-\lambda)|a_i'|+|a_i'| = (2-\lambda)|a_i'|$.
\end{proof}

\begin{figure}[htbp]
\centering
\includegraphics[width=.9\textwidth]{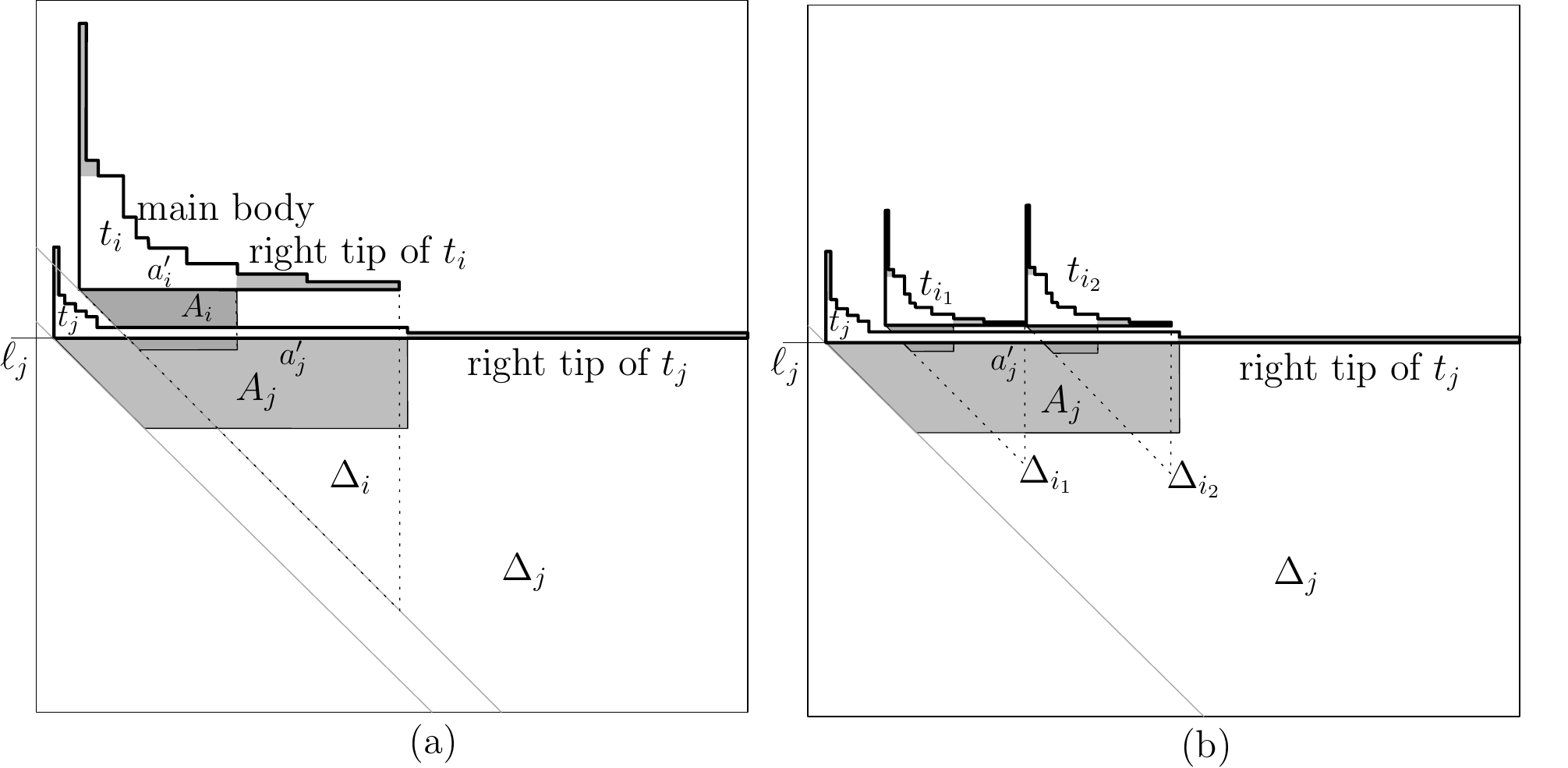}
%\vspace{-.5\baselineskip}
\caption{(a) If $A_i$ and $A_j$ overlap, and $a_j$ lies below $a_i$, then
  $|a_j'|\geq (2-\lambda)|a_i'|$.
(b) Trapezoids $A_{i_1}$ and $A_{i_2}$ intersect $A_j$ from above
such that the intervals  $a_j'\cap \Delta_{i_1}$ and
$a_j'\cap \Delta_{i_2}$ are disjoint.}
\label{fig:5}
\end{figure}

\paragraph{Charging scheme.}
We introduce a charging scheme among the trapezoids $A_i$, $i\in W$.
Initially, each trapezoid $A_i$ has a charge of $\area(A_i)$.
We transfer the charges to a subset of pairwise disjoint
trapezoids. The transfer of charges is represented by a directed
acyclic graph $G$ defined as follows.
The nodes of $G$ correspond to the trapezoids $A_i$, $i\in W$.
If $A_i$ intersects some other trapezoid below, we add a unique outgoing edge
from $A_i$ to the trapezoid $A_j$, $j\in W$, whose top side $a_j'$ is the highest
below $a_i'$. Observe that all edges of $G$ are oriented downwards,
thus $G$ is acyclic. By construction, the out-degree of $G$ is at most one.
However, the in-degree of a node in $G$ may be higher than one.

\begin{lemma} \label{lem:charge}
For every trapezoid $A_j$, the total area of all trapezoids $A_i$, $i\neq j$, 
with a directed path in $G$ to $A_j$ is at most
$\frac{1}{(1-\lambda)(2-\lambda)} \area(A_j)$.
\end{lemma}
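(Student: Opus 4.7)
The approach is induction over the rooted tree formed by all trapezoids with a directed path to $A_j$ in $G$. Since each node has out-degree at most one, this really is a tree (with edges pointing toward the root $A_j$), rather than a general DAG. Writing $T(k)$ for the total area of proper ancestors of $A_k$ in $G$, I want to prove $T(j)\leq \frac{\area(A_j)}{(1-\lambda)(2-\lambda)}$ by induction on tree depth; the base case $N_j=\emptyset$ gives $T(j)=0$ and is immediate.

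The heart of the proof is the per-level bound
$$\sum_{i\in N_j}\area(A_i)\ \leq\ \frac{\area(A_j)}{(2-\lambda)^2},$$
where $N_j$ denotes the set of direct predecessors of $A_j$ in $G$. Each $i\in N_j$ satisfies $A_i\cap A_j\neq\emptyset$ and $\ell_i$ above $\ell_j$, so Lemma~\ref{lem:exp} gives $\ell_j\cap \Delta_i\subseteq a_j'$ with $|\ell_j\cap \Delta_i|\geq (2-\lambda)|a_i'|$. The crucial geometric claim, illustrated in Fig.~\ref{fig:5}(b), is that these intervals $\{\ell_j\cap \Delta_i : i\in N_j\}$ are pairwise disjoint on $a_j'$. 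Granted this, summing yields $\sum_{i\in N_j}(2-\lambda)|a_i'|\leq |a_j'|$, and combining with $\area(A_k)=\tfrac{\lambda(2-\lambda)}{2}|a_k'|^2$ and the elementary inequality $\sum x_i^2\leq(\sum x_i)^2$ for nonnegative $x_i$ produces the per-level bound.

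The main obstacle is establishing the disjointness. For $i_1,i_2\in N_j$ with $\ell_{i_1}$ strictly above $\ell_{i_2}$, the edge rule of $G$ forces $A_{i_1}\cap A_{i_2}=\emptyset$, for otherwise the outgoing edge from $A_{i_1}$ would have targeted $A_{i_2}$ (or a trapezoid even higher below $\ell_{i_1}$) rather than $A_j$. Combining this with Lemma~\ref{lem:empty}, which keeps $s_{i_2}$ out of the interior of $\Delta_{i_1}$, constrains $s_{i_2}$ to lie either horizontally to the right of the vertical right side of $\Delta_{i_1}$ or weakly to the left of its slope-$-1$ hypotenuse at level $\ell_{i_2}$. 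In the former case $\Delta_{i_2}$ sits entirely to the right of $\Delta_{i_1}$ at every level below $\ell_{i_2}$, so the two intervals at $\ell_j$ are trivially separated. In the latter case I trace a hypothetical overlap point on $\ell_j$ vertically upward inside $\Delta_{i_2}$ to level $\ell_{i_2}$ and use the slope-$-1$ hypotenuse of $\Delta_{i_1}$ together with Lemma~\ref{lem:tip} (which controls the width of the right tip) to produce a point of $A_{i_1}\cap A_{i_2}$, contradicting the edge-rule conclusion.

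Assuming the per-level bound, the inductive step reads
$$T(j)\ =\ \sum_{i\in N_j}\bigl(\area(A_i)+T(i)\bigr)\ \leq\ \left(1+\frac{1}{(1-\lambda)(2-\lambda)}\right)\cdot\frac{\area(A_j)}{(2-\lambda)^2},$$
and the desired inequality $T(j)\leq \frac{\area(A_j)}{(1-\lambda)(2-\lambda)}$ reduces to the purely algebraic fact $(1-\lambda)(2-\lambda)+1\leq (2-\lambda)^2$, which simplifies to $\lambda\leq 1$ and thus holds throughout the admissible range.
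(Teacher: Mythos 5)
Your approach is a genuine departure from the paper's: you replace the paper's geometric argument (translating every trapezoid in $W_j$ along direction $(-1,1)$ into a single ``gray triangle'' of area $\area(A_j)/((1-\lambda)(2-\lambda))$) by an induction on tree depth driven by a per-level bound $\sum_{i\in N_j}\area(A_i)\leq \area(A_j)/(2-\lambda)^2$. The algebra of your induction step is correct (it closes because $(1-\lambda)(2-\lambda)+1\leq(2-\lambda)^2$, i.e.\ $\lambda\leq 1$), and the route would be pleasantly cleaner than the paper's translation argument if it worked. However, there is a genuine gap in the geometric ingredient you rely on.

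The crucial claim --- that the intervals $\ell_j\cap\Delta_i$, $i\in N_j$, are pairwise disjoint on $a_j'$ --- is false. The paper's own disjointness statement is weaker: it is the \emph{strips} $\Xi_i$ (bounded by the two slope~$-1$ lines through the endpoints of $a_i'$) that are pairwise disjoint. At level $\ell_j$ the strip $\Xi_i$ has horizontal width exactly $|a_i'|$, whereas $\ell_j\cap\Delta_i$ has length at least $(2-\lambda)|a_i'|$: the triangle $\Delta_i$ extends to the right of $\Xi_i$ by at least $|a_i'|$, because by Lemma~\ref{lem:tip} the right tip of $t_i$ has width at least $|a_i'|$. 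That extra overhang is exactly what makes your Case~2 fail. Concretely, if $A_{i_1}$ is above $A_{i_2}$ and $s_{i_2}$ lies (weakly) left of the hypotenuse of $\Delta_{i_1}$, strip disjointness forces $x_{i_2}+|a_{i_2}'|+y_{i_2}\leq x_{i_1}+y_{i_1}$, but the right end of $\ell_j\cap\Delta_{i_2}$ is at $x_{i_2}+|a_{i_2}|\geq x_{i_2}+2|a_{i_2}'|$, while $y_{i_2}-y_j<\lambda|a_{i_2}'|<|a_{i_2}'|$; hence $x_{i_2}+|a_{i_2}|$ can exceed the left end $x_{i_1}+y_{i_1}-y_j$ of $\ell_j\cap\Delta_{i_1}$. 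A concrete example with $\lambda=0.45$, $s_j=(0,0)$, $|a_j'|=1$: take $s_{i_1}=(0.5,0.02)$, $|a_{i_1}'|=0.1$, $|a_{i_1}|=0.2$ and $s_{i_2}=(0.4,0.01)$, $|a_{i_2}'|=0.1$, $|a_{i_2}|=0.2$. The strips $0.52\leq x+y\leq 0.62$ and $0.41\leq x+y\leq 0.51$ are disjoint, $A_{i_1},A_{i_2}$ are interior-disjoint, both intersect $A_j$, $s_{i_2}\notin\mathrm{int}\,\Delta_{i_1}$, yet $\ell_j\cap\Delta_{i_1}=[0.52,0.7]$ and $\ell_j\cap\Delta_{i_2}=[0.41,0.6]$ overlap.

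The disjointness you can legitimately extract (via the paper's strip argument) is only $\sum_{i\in N_j}|a_i'|\leq|a_j'|$, which, combined with the elementary $\sum x_i^2\leq(\sum x_i)^2$, gives $\sum\area(A_i)\leq\area(A_j)/\bigl(\tfrac{\lambda(2-\lambda)}{2}\bigr)^{-1}\cdot\tfrac{\lambda(2-\lambda)}{2}|a_j'|^2/|a_j'|^2$ --- i.e.\ only $\sum_{i\in N_j}\area(A_i)\leq\area(A_j)$, far too weak for your induction to close (you would need the level-1 constant to be at most $\tfrac{1}{1+(1-\lambda)(2-\lambda)}$). Moreover, even the truth of the per-level bound itself is not obvious --- the constraints $|a_i'|\leq|a_j'|/(2-\lambda)$ and $\sum|a_i'|\leq|a_j'|$ permit $\sum|a_i'|^2$ slightly above $|a_j'|^2/(2-\lambda)^2$ --- so the obstruction is not merely in the proof of the per-level bound but possibly in the statement. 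The paper sidesteps this entirely by packing all levels simultaneously into the gray triangle rather than bounding level by level.
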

\begin{proof}
Fix a trapezoid $A_j$, $j\in W$, and denote by $\ell_j$ be the 
supporting line of $a_j$. Refer to Fig.~\ref{fig:7}(a). 
For $k\geq 1$, let $W_j^k\subseteq W$ be the set of indices $i$ of 
trapezoids $A_i$ that have a directed path of length exactly $k$ to $A_j$ in $G$. 
We say that the trapezoids $A_i$, $i\in W_j^k$ are {\em on level $k$}.
In particular, the trapezoids $A_i$, $i\in W_j^1$, on level 1 are connected to 
$A_j$ with a directed edge $(A_i,A_j) \in G$. 
Let $W_j = \bigcup_{k\geq 1}W_j^k$ be the set of indices of {\em all} trapezoids 
$A_i$, $i\neq j$, with a directed path in $G$ to $A_j$. For each $A_i$, $i\in W_j$,
denote by $A_i^*$ the unique trapezoid with $(A_i,A_i^*)\in G$.

By Lemma~\ref{lem:exp}, every trapezoid $A_i$, $i\in W_j^1$, has 
width at most $|a_i'|\leq |a_j'|/(2-\lambda)$, and height at most 
$\frac{\lambda}{2-\lambda}|a_j'|$. Equality is possible if the 
lower left corner of $A_i$ coincides with the upper left corner $s_j$ of $A_j$
(see $A_{i_1}$ in Fig.~\ref{fig:7}(a)). The gray triangle in Fig.~\ref{fig:7}(a) 
is the minimum triangle with base $a_j'$ that contains the maximal possible 
trapezoid intersecting $A_j$ from above. By triangle similarity, 
the height of this triangle is $\frac{\lambda}{1-\lambda}|a_j'|$, 
hence its area is  (using Equation~\eqref{eq:trapezoid})
\begin{equation}\label{eq:triangle}
\frac{1}{2}\cdot |a_j'|\cdot \frac{\lambda}{1-\lambda}|a_j'|
=\frac{\lambda}{2(1-\lambda)}|a_j'|^2
=\frac{\area(A_j)}{(1-\lambda)(2-\lambda)}.
\end{equation}
\begin{figure}[htbp]
\centering
\includegraphics[width=.95\textwidth]{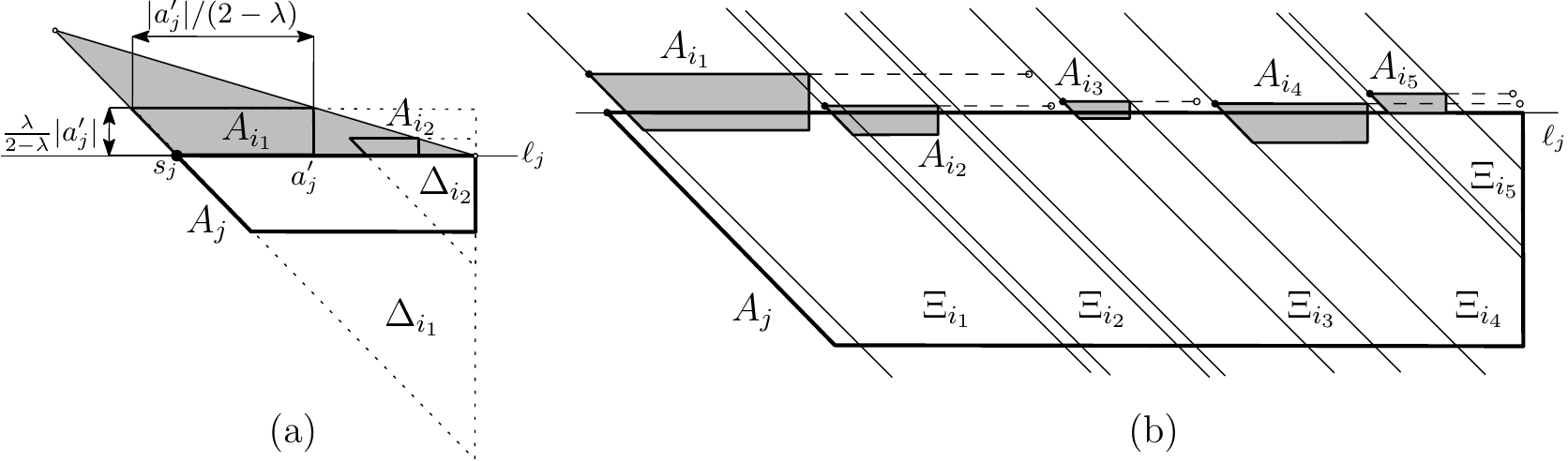}
%\vspace{-.5\baselineskip}
\caption{(a) $A_{i_1}$ is the largest possible trapezoid
that can intersect $A_j$ from above. All trapezoids with a 
directed path in $G$ to $A_j$ can be translated (without overlaps) 
into the gray triangle.
(b) Disjoint trapezoids $A_{i_1}\ldots , A_{i_5}$ intersect $A_j$ from
above. Each of these trapezoids induces a parallel strip $\Xi_i$. If
the strips $\Xi_{i_1}$ and $\Xi_{i_2}$ intersect, and $A_{i_1}$ is above
$A_{i_2}$, then $s_{i_2}$ lies in the interior of $\Delta_{i_1}$.
Dashed lines indicate segments $a_i\setminus a_i'$.}
\label{fig:7}
\end{figure}

We claim that $\sum_{i\in W_j}\area(A_i)$ is at most the area of
the gray triangle in Figure~\ref{fig:7}(a).
To verify the claim, we translate every trapezoid $A_i$, $i\in W_j$,
into the gray triangle region such that they remain pairwise disjoint. Each trapezoid
will be translated in the same direction, $(-1,1)$, but at different distances. In
order to control the possible location of the translates, we enclose each $A_i$, 
$i\in W_j$, in a parallel strip. 
For every $i\in W_j$, draw lines of slope $-1$ through the two endpoints of $a_i'$,
and denote by $\Xi_i$ the strip bounded by the two lines. Refer to Fig.~\ref{fig:7}(b).

First consider the trapezoids $A_i$, $i\in W_j^1$, which are connected
to $A_j$ by a directed edge in $G$. By the definition of $G$, the
trapezoids $A_i$, $i\in W_j^1$, are pairwise disjoint. Label their
strips in increasing order from left to right. We show that the strips 
$\Xi_i$, $i\in W_j^1$, are pairwise interior-disjoint. Suppose to the
contrary that $\Xi_{i_1}$ and  $\Xi_{i_2}$ intersect, where
${i_1}<{i_2}$, and such that $A_{i_1}$ is above $A_{i_2}$ (if
$A_{i_1}$ is below $A_{i_2}$, the strips are obviously disjoint). 
Since the trapezoids are disjoint, the left endpoint of
$a_{i_2}'$, $s_{i_2}\in S$, lies in the interior of the
isosceles right triangle bounded by the right side of
$A_{i_1}$, line $\ell_j$, and the line of slope $-1$ bounding the strip
$\Xi_{i_1}$ from the right. Since $\lambda<1$
and $|a_{i_1}|-|a_{i_1}'| \geq |a_{i_1}'|$ by Lemma~\ref{lem:tip},
this triangle is contained in $\Delta_{i_1}$. However, the triangle
$\Delta_{i_1}$ is empty of points from $S$, and we reached a contradiction.

Applying the above argument for the trapezoids on level $k$, $k=1,2,\ldots$, 
we conclude that the trapezoids on level $k+1$ are pairwise disjoint, and 
they induce pairwise disjoint parallel strips. 

We are now ready to describe the translation of the trapezoids $A_i$, $i\in W_j$. 
We translate the trapezoids in direction $(-1,1)$ in phases $k=1,2,\ldots$. In phase $k$,
consider each $A_i$, $i\in W_j^k$, independently. Translate $A_i$ together with all other
trapezoids that have a directed path to $A_i$ by the same vector in direction $(-1,1)$
until the lower side of $A_i$ becomes collinear with the upper side of $A_i^*$.
Each trapezoid $A_i$ remains in its parallel strip $\Xi_i$, therefore the trapezoids
on the same level remain pairwise disjoint. After phase $k$, there is no overlap 
between a trapezoid $A_i$, $i \in W_j^k$ of level $k$ and trapezoids in lower levels. 
When all phases are complete, all trapezoids are pairwise interior-disjoint. 

It remains to show that after the translation, all trapezoids lie in the 
gray triangle in Fig.~\ref{fig:7}(a). From Lemma~\ref{lem:exp} and since 
all translations were done in direction $(-1,1)$, the  trapezoids are 
on or to the right of the line of slope $-1$ passing through $s_j$. 
Also from Lemma~\ref{lem:exp}, the right endpoint of $a_j'$ is to the 
right of the right side of every triangle $\Delta_i$, $i\in W_j^1$, hence 
to the right of the right side of every triangle $\Delta_i$, $i\in W_j$. 
Also, if $i\in W_j^1$, by Lemma~\ref{lem:tip} we have
$2|a_i'| \leq |a_i|$, and so
$\frac{\lambda |a_i'|}{|a_i|-|a_i'|} \leq \frac{\lambda |a_i'|}{|a_i'|} =\lambda$.
Hence the upper right corner of every $A_i$, $i\in W_j^1$, is below
the line of slope $-\lambda$ passing through the right endpoint of $a_j'$, 
and consequently, the upper right corner
of every $A_i$, $i\in W_j$, is below the line of slope $-\lambda$
passing through the right endpoint of $a_j'$.
Therefore, after the above translation, all trapezoids $A_i$, $i\in W_j$,
are contained in the gray triangle in Figure~\ref{fig:5}(c). 
This verifies the above claim and completes the proof of the lemma.
\end{proof}

Transfer the charges from all nodes to the sinks in $G$ along directed paths.
By Lemma~\ref{lem:charge}, the total area charged to a sink $A_j$ is less than
\begin{equation}\label{eq:geometric}
\area(A_j)+\frac{1}{(1-\lambda)(2-\lambda)}\area(A_j)=
\frac{3-3\lambda+\lambda^2}{(1-\lambda)(2-\lambda)}\area(A_j)
\end{equation}

The area of every trapezoid $A_i$, $i\in W$, is charged to some sink in $G$,
and the sinks correspond to pairwise disjoint trapezoids. We can
now adjust Inequality~\eqref{eq:no-overlap} to obtain
\begin{equation}\label{eq:overlap}
\sum_{i\in W}\area(t_i)
< \frac{3-3\lambda+\lambda^2}{(1-\lambda)(2-\lambda)} \cdot
\frac{(8+3\lambda-\lambda^2)}{4\lambda(2-\lambda)}
\cdot \frac{\beta}{e^{\beta-5}}
= \frac{(3-3\lambda+\lambda^2)(8+3\lambda-\lambda^2)}{4\lambda(1-\lambda)(2-\lambda)^2}
\cdot \frac{\beta}{e^{\beta-5}}.
\end{equation}

The area of {\em all} $\beta$-tiles is less than twice the right hand-side
of \eqref{eq:overlap}, namely we can set
\begin{equation}\label{eq:F}
F(\beta,\lambda)
=\frac{(3-3\lambda+\lambda^2)(8+3\lambda-\lambda^2)}{2\lambda(1-\lambda)(2-\lambda)^2}
\cdot \frac{\beta}{e^{\beta-5}}.
\end{equation}

From \eqref{eq:total}, it follows that the total area of all
rectangles in $R$ is
\begin{align*}
\rho \geq \frac{1-F(\beta,\lambda)}{\beta}
&=\frac{1}{\beta}\left(1-
\frac{(3-3\lambda+\lambda^2)(8+3\lambda-\lambda^2)}{2\lambda(1-\lambda)(2-\lambda)^2}
\cdot \frac{\beta}{e^{\beta-5}}\right)\\
&=\frac{1}{\beta} -
\frac{(3-3\lambda+\lambda^2)(8+3\lambda-\lambda^2)}{2\lambda(1-\lambda)(2-\lambda)^2}
\cdot \frac{1}{e^{\beta-5}}.
\end{align*}

Whenever $F(\beta,\lambda)<1$, this already gives a lower bound of
$\rho=\Omega(1)$. We have optimized the parameters $\beta$ and
$\lambda$ with numerical methods. 
With the choice of $\beta=12.75$ and $\lambda=0.45$, we obtain an initial
lower bound of $\rho \geq 0.07229$.

\subsection{Making $\beta$  a continuous variable } \label{ssec:int}

In this section we further improve the lower bound on the covered area
to $0.09121$ by making $\beta$  a continuous variable and using integration.
We define the {\em contribution} of each point $p\in t_i\subseteq U$, as
$u(p)=\area(r_i)/\area(t_i)$. With this definition, we have
$$\sum_{i=1}^n\area(r_i)=\iint_{p\in U} u(p)\ {\rm d}A.$$

Let $\beta_0 \geq 5$ be a parameter to be optimized later (we will choose $\beta_0=9.955$).
Partition the interval $[\beta_0,\infty)$ into subintervals of length $\eps>0$:
$[\beta_0,\infty)=\bigcup_{j=1}^{\infty} [\beta_{j-1},\beta_j)$, where $\beta_j=\beta_0+j\eps$.
Denote by $B_j\subset U$ the union of all $\beta_j$-tiles, 
and let $\overline{B}_j=U\setminus B_j$.
By definition, we have $u(P)\geq 1/\beta_j$ for every $p\in \overline{B}_j$, and so
$\iint_{p\in \overline{B}_j} u(p)\ {\rm d}A \geq \area(\overline{B}_j)/\beta_j$.

Observe that the sets $\overline{B}_j$ form a nested sequence
$\overline{B}_0\subseteq \overline{B}_1\subseteq \overline{B}_2\subseteq \ldots\subseteq U$.
The total contribution of all points in $U$ can be written as
\begin{eqnarray}
\sum_{j=1}^n\area(r_i)
&=&\iint_{p\in U} u(p)\ {\rm d}A \nonumber\\
&=& \iint_{p\in \overline{B}_0} u(p)\ {\rm d}A
    + \sum_{j=1}^{\infty} \iint_{p\in \overline{B}_j\setminus \overline{B}_{j-1}} u(p)\ {\rm d}A\nonumber\\
&\geq& \frac{\area(\overline{B}_0)}{\beta_0}
    + \sum_{j=1}^{\infty} \frac{\area(\overline{B}_j\setminus \overline{B}_{j-1})}{\beta_j}\nonumber\\
&=& \frac{\area(\overline{B}_0)}{\beta_0}
    + \sum_{j=1}^{\infty} \frac{\area(\overline{B}_j)-\area(\overline{B}_{j-1})}{\beta_j}\nonumber\\
&=& \sum_{j=0}^{\infty}
     \area(\overline{B}_j)\left(\frac{1}{\beta_j}-\frac{1}{\beta_{j+1}}\right).\nonumber
\end{eqnarray}

In Section~\ref{ssec:manytiles}, we showed that for any $j \geq 0$,
$\area(B_j)<F(\beta_j,\lambda)$, where $F(\beta,\lambda)$ is given
by~\eqref{eq:F}. It follows that
$\area(\overline{B}_j)>1-F(\beta_j,\lambda)$, and therefore,
\begin{eqnarray}
\rho&\geq&\sum_{j=0}^\infty
(1-F(\beta_j,\lambda))\left(\frac{1}{\beta_j}-\frac{1}{\beta_{j+1}}\right)\nonumber\\
&=&\frac{1-F(\beta_0,\lambda)}{\beta_0}+
\sum_{j=0}^\infty \frac{(1-F(\beta_{j+1},\lambda)) -(1-F(\beta_j,\lambda))}{\beta_{j+1}}\nonumber\\
&=&\frac{1-F(\beta_0,\lambda)}{\beta_0}+
\sum_{j=0}^\infty \frac{F(\beta_j,\lambda) -F(\beta_{j+1},\lambda)}{\beta_{j+1}}\nonumber\\
&=&\frac{1-F(\beta_0,\lambda)}{\beta_0}-
\sum_{j=0}^\infty \frac{1}{\beta_{j+1}}\cdot
\frac{F(\beta_{j+1},\lambda)-F(\beta_j,\lambda)}{\beta_{j+1}-\beta_j}\cdot
(\beta_{j+1}-\beta_j). \nonumber
\end{eqnarray}

Letting $\eps$ go to 0 yields
\begin{eqnarray*} 
\rho&\geq&
\frac{1-F(\beta_0,\lambda)}{\beta_0}-
\int_{\beta_0}^{\infty} \left(\frac{1}{\beta} \cdot
\frac{\partial}{\partial \beta}F(\beta,\lambda)\right) {\rm d} \beta\\
&=&\frac{1}{\beta_0}+\frac{(3-3\lambda+\lambda^2)(8+3\lambda-\lambda^2)}{2\lambda(1-\lambda)(2-\lambda)^2}e^5
\left(-\frac{1}{e^{\beta_0}} - \int_{\beta_0}^\infty
\left(\frac{1}{\beta}\cdot \frac{{\rm d}}{{\rm d}
  \beta}\frac{\beta}{e^\beta} \right){\rm d}\beta \right)\\
&=&\frac{1}{\beta_0}+\frac{(3-3\lambda+\lambda^2)(8+3\lambda-\lambda^2)}{2\lambda(1-\lambda)(2-\lambda)^2}e^5
\left(-\frac{1}{e^{\beta_0}} + \int_{\beta_0}^\infty
\left(\frac{1}{\beta}\cdot \frac{\beta-1}{e^\beta} \right){\rm d}\beta
\right)\\
&=&\frac{1}{\beta_0}+\frac{(3-3\lambda+\lambda^2)(8+3\lambda-\lambda^2)}{2\lambda(1-\lambda)(2-\lambda)^2}e^5
\left(-\frac{1}{e^{\beta_0}} + \int_{\beta_0}^\infty
\frac{1}{e^\beta}-\frac{1}{\beta e^\beta}{\rm d}\beta
\right)\\
&=&\frac{1}{\beta_0}-\frac{(3-3\lambda+\lambda^2)(8+3\lambda-\lambda^2)}{2\lambda(1-\lambda)(2-\lambda)^2}e^5
\int_{\beta_0}^\infty \frac{1}{\beta e^\beta}{\rm d}\beta\\
&=&\frac{1}{\beta_0}-\frac{(3-3\lambda+\lambda^2)(8+3\lambda-\lambda^2)}{2\lambda(1-\lambda)(2-\lambda)^2}e^5
E_1(\beta_0),
\end{eqnarray*}
where $E_1(x)$ is the exponential integral
$$ {\rm E}_1(x)=\int_x^\infty \frac{1}{t e^t} {\rm d}t .$$
For every $x>0$, this exponential integral can be approximated by the
initial terms of the convergent series
$$ {\rm E}_1(x)=-\gamma-\ln x -\sum_{k=1}^\infty \frac{(-1)^k x^{k}}{k\cdot k!}, $$
where $\gamma=0.57721566\ldots$ is Euler's constant; see~\cite{AS64}.
With the choice of $\beta_0=9.955$ and $\lambda =0.452$, we obtain
$\rho \geq 0.09121$.

Taking into account Lemma~\ref{lem:greedy}, we summarize our main
result in the following theorem.
\begin{theorem}\label{thm:1}
For any finite point set $S\subset U$, $(0,0)\in S$, the algorithm {\sc
  TilePacking} chooses a set of rectangles of total area
$\rho \geq 0.09121$. Consequently, the same guarantee holds for the
algorithm {\sc GreedyPacking}.
\end{theorem}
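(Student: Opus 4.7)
The theorem is essentially a packaging of the work already done in Sections~\ref{ssec:onetile}--\ref{ssec:int}, so my plan is mostly to invoke the machinery already built, fix the parameters, and verify the numerical bound.

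First, I would handle the {\sc TilePacking} bound by returning to the integrated inequality derived in Section~\ref{ssec:int}, namely
\[
\rho \;\geq\; \frac{1}{\beta_0}\;-\;\frac{(3-3\lambda+\lambda^2)(8+3\lambda-\lambda^2)}{2\lambda(1-\lambda)(2-\lambda)^2}\,e^5\,{\rm E}_1(\beta_0),
\]
which holds for every $\beta_0 \geq 5$ and every $0<\lambda<1$ by combining Lemma~\ref{lem:sides}, Lemma~\ref{lem:tip}, Lemma~\ref{lem:empty}, and the charging scheme culminating in Lemma~\ref{lem:charge} and formula~\eqref{eq:F}. The task is then to pick $(\beta_0,\lambda)$ to maximize the right hand side; I would set $\beta_0=9.955$ and $\lambda=0.452$, as suggested, and verify the resulting value is at least $0.09121$.

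The second step is a short numerical verification. Using the convergent series
\[
{\rm E}_1(x)=-\gamma-\ln x-\sum_{k=1}^{\infty}\frac{(-1)^k x^k}{k\cdot k!},
\]
I would truncate after enough terms to compute ${\rm E}_1(9.955)$ to sufficient precision, plug in, and confirm that the lower bound on $\rho$ indeed exceeds $0.09121$. Since ${\rm E}_1(x)$ decays like $e^{-x}/x$ while $1/\beta_0 \approx 0.1005$, the subtracted term is small, and the computation is routine once the series is truncated carefully (this is the only place where rounding must be tracked).

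Finally, to transfer the bound from {\sc TilePacking} to {\sc GreedyPacking}, I would invoke Lemma~\ref{lem:greedy}: for every $s_i \in S$, the greedy rectangle has area at least that of the tile-packing rectangle, so summing over $i$ gives at least the same total area. Therefore the lower bound $\rho \geq 0.09121$ proven for {\sc TilePacking} immediately carries over to {\sc GreedyPacking}, which completes the proof. No step is a real obstacle; the bulk of the work has already been discharged in the preceding subsections, and the only item that needs care is the numerical verification in step two.
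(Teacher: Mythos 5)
Your proposal is correct and follows the same route as the paper: the theorem is indeed a summary of the machinery in Sections~\ref{ssec:onetile}--\ref{ssec:int}, obtained by plugging $\beta_0=9.955$ and $\lambda=0.452$ into the integrated bound involving ${\rm E}_1(\beta_0)$, verifying the resulting value numerically, and then invoking Lemma~\ref{lem:greedy} to transfer the guarantee from {\sc TilePacking} to {\sc GreedyPacking}.
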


\subsection{Runtime analysis\label{ssec:runtikme}}

It is not difficult to show that {\sc TilePacking} can be implemented
in $O(n\log n)$ time and $O(n)$ space in the RAM model of
computation. The input is a set $S$ of $n$ points in the unit square
$U=[0,1]^2$. Clearly, $S$ can be sorted in $O(n\log n)$ time in
non-increasing order of the sum of coordinates. Assume that the points
are labeled $s_1,s_2,\ldots , s_n$ in this order.

We compute the tiles sequentially in $n$ steps. Let $P_i=U\setminus
\bigcup_{j<i} t_j$ be the staircase polygon left from $U$ after
deleting the first $i-1$ tiles. We maintain the $x$- and $y$-coordinates
of the vertices of $P_i$, respectively, in two binary search trees.
In step $i$, we compute the tile $t_i$ by shooting a vertical
(resp., horizontal) ray from $s_i$ until it hits the boundary
of $P_i$. The point hit by an axis-parallel ray can be
found with a simple binary search in $O(\log n)$ time. Once the sides
$a_i$ and $b_i$ have been determined, we insert the $x$- and $y$-coordinates
of $s_i$ into the search trees, and delete the points that are not
vertices of $P_i$ subsequently. Each point in $S$ is inserted and deleted
at most once, so the search trees can be maintained in $O(n \log n)$
total time.

Recall that the generated tiles are staircase polygons, and their reflex
vertices are points in $S$. Since every point in $S$ is a reflex
vertex in at most one tile, the total complexity of the $n$ tiles is
$O(n)$. A tile with $k$ reflex vertices contains exactly $k+1$
maximal axis-aligned rectangles, and one with the largest area can be selected in
$O(k)$ time. Altogether, we can pick an axis-aligned rectangle of
maximum area from each of the $n$ tiles in $O(n)$ total time.
Hence {\sc TilePacking} runs in $O(n\log n)$ time.

\section{Conclusion}

We have shown that in the 1-round rectangle packing game, no matter
how Alice chooses a finite set of points in $S$, where $(0,0)\in S$, Bob can
always construct a rectangle packing with rectangles anchored at the
points in $S$ that cover at least a constant area. 
Allen Freedman~\cite{Tu69} asked whether Bob can always cover at least
1/2 of the unit square. Bill Pulleyblank and Peter Winkler 
conjectured that this is true. 
While we cannot confirm this at the moment, we believe that the
performance of our {\sc GreedyPacking} and {\sc TilePacking} algorithms
is significantly better than what we proved here.

We suspect that the problem of finding the rectangles with maximum total area
anchored at the given points is NP-hard, but this remains to be shown.
Our algorithms certainly achieve a constant approximation ratio $0.09121$.
No efficient exact algorithm or good approximation was previously known.

\paragraph{Special cases and variants.}
It is easily seen that the conjecture holds for ``permutation point sets'', 
namely integer $n$-element point sets from the 
$\{0,1,\ldots,n-1\} \times \{0,1,\ldots,n-1\}$ grid, with exactly 
one grid point in each row and column, and containing $(0,0)$, as required.
The unit square is now $[0,n] \times [0,n]$. 
This is in fact the only family of sets for which we could verify the 
conjecture. Indeed, for each point, say $(i,j)$, select the rectangle
$[i,n] \times [j,j+1]$; then the average width of the chosen
rectangles is $(n+1)/2$, each rectangle has unit height, and so 
the corresponding covered area ratio is
$\frac{1}{2}+\frac{1}{2n}$ for each of the $(n-1)!$ point sets.  
If the anchoring condition is relaxed so that the anchor point
of a rectangle can be either of its leftmost two vertices, then it is
easy to cover an area of at least $1/2$ as well. 
% xxx, what is the conjectured value now? seems to be more than 1/2.
% what is the bad example?

\paragraph{Higher-dimensional version.}
The $d$-dimensional generalization of the 1-round rectangle packing game
is also very interesting, and almost nothing is known about it.
If $S$ is a set of equally spaced points along the main diagonal of a
$d$-dimensional unit cube $U=[0,1]^d$, where $(0,\dots,0) \in S$,
then the total volume covered by any anchored $d$-dimensional
axis-parallel rectangle packing is roughly $1/d$.
Our {\sc GreedyPacking} and {\sc TilePacking} algorithms readily
generalize to $d$ dimensions, but the performance analysis does not
seem to be easily extendible. In particular, the definition of
$\beta$-tiles extends to arbitrary dimensions. Lemma~\ref{lem:sides}
also carries over (\ie, the volume of a $\beta$-tile is exponentially
smaller than the volume of its bounding box); and there are large
empty convex polytopes along the edges of a $\beta$-tile 
(in $d$-space, these are the $d$ edges incident to the anchor point)
similarly to the empty triangles $\Delta_i$ and $\Gamma_i$ in the plane. 
However, it is not clear what could be the analogues of the trapezoids
$A_i$ in higher dimensions and whether any charging scheme could be set up.

\paragraph{Multi-round versions.}
A natural generalization of the problem is the multi-round rectangle
packing game. One can consider two versions, depending on whether the number
of rounds is known in advance. In the {\em $n$-round rectangle packing
  game}, both Alice and Bob know the number of rounds. In round $i$,
first Alice places a point $s_i\in [0,1]^2$ somewhere outside of Bob's
rectangles, and then Bob chooses an axis-aligned rectangle
$r_i\subseteq [0,1]^2$ with lower left corner at $s_i$ and
interior-disjoint from his previous rectangles. Alice has to choose
the origin in one of the $n$ rounds. In the {\em unlimited rectangle
  packing game}, the number of rounds (or points) is not known in
advance. Each round goes exactly as in the $n$-round version, but the
game terminates when Alice decides to put a point at the origin and
Bob chooses his last rectangle incident to the origin.

For both versions of the multi-round game, Bob could employ a greedy
strategy: for each point $s_i$, let $r_i$ be an axis-aligned rectangle
of maximum area with lower left corner at $s_i$ that is interior-disjoint
from all previous rectangles. However, our analysis does not
extend to these versions of the game. In fact, we can show that the
greedy strategy 
cannot guarantee any constant area for Bob.
Whether Bob can secure a constant fraction of the area by other means 
in any of the multi-round versions of the game remains open. 

\begin{theorem}\label{thm:multiround}
In both multi-round versions of the rectangle packing game, Bob
cannot always cover $\Omega(1)$ area with a greedy strategy.
\end{theorem}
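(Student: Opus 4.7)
The plan is to construct an adaptive adversarial strategy for Alice that forces greedy Bob's total covered area to tend to~$0$. The key insight is that, unlike in the single-round setting where {\sc GreedyPacking} processes points in dominance-decreasing order and inherits the constant-factor guarantee of Theorem~\ref{thm:1}, in the multi-round game Bob must process Alice's points in her adversarially chosen order, and the dominance argument of Section~\ref{sec:analysis} breaks down. So the strategy's aim is to defeat this order-structure.

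I would proceed in three steps. First, for each target~$\varepsilon>0$, design a strategy in which Alice builds a staircase of points approximating the hyperbola $xy=\varepsilon$ inside~$[0,1]^2$. The staircase is placed adaptively: after observing each of Bob's greedy rectangles, Alice places the next point just outside its lower-left corner, in a location chosen so that the next greedy rectangle is pinned between existing rectangles and the boundary of~$[0,1]^2$. Second, verify that each of Bob's non-origin greedy rectangles then has area at most~$\varepsilon/N$, where $N=\Theta(\log 1/\varepsilon)$ is the number of staircase points; and show that the staircase forces Bob's final rectangle at the origin to have area at most~$O(\varepsilon)$. Third, sum the contributions: Bob's total coverage is at most $N\cdot(\varepsilon/N)+O(\varepsilon)=O(\varepsilon)$, which tends to~$0$ as $\varepsilon\to 0$.

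The main obstacle is the second step: verifying the per-rectangle bound against arbitrary greedy tie-breaking. Greedy chooses a maximum-area anchored rectangle, and at an intermediate step both a horizontal and a vertical extension of a candidate rectangle may be large. Alice's placement must pin Bob from both directions simultaneously. A way to handle this is to interleave two types of auxiliary placements --- one blocking the horizontal extension, the other the vertical --- and to maintain, by induction, a ``pocket'' region of controlled aspect ratio that contains the origin and is bounded on two sides by existing Bob rectangles. The base case requires placing the very first point near the corner~$(1,1)$ of the unit square, where the greedy rectangle is trivially tiny (of size~$O(\varepsilon)$) even though this first point alone contributes no useful constraint on the origin rectangle; this auxiliary placement is what starts the cascade.

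Both multi-round variants admit essentially the same construction: in the unlimited version Alice terminates by placing the origin after $N$ rounds, and in the $n$-round version Alice matches $n$ to the number of staircase rounds she needs (plus one for the origin).
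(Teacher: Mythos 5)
The approach has a genuine gap. Your construction uses a single hyperbola-like staircase with $N = \Theta(\log 1/\varepsilon)$ adaptively-placed points, plus a ``pocket'' invariant, and asserts a per-rectangle bound of $\varepsilon/N$. But this bound is exactly the step you flag as ``the main obstacle,'' and the sketch you offer does not close it. The problem is recursive: to prevent Bob from grabbing a large greedy rectangle at an intermediate staircase point $(x_k,y_k)$ (which, unblocked, could have area $\approx (1-x_k)(1-y_k) = \Omega(1)$), you must first place blocking points; but each blocking point is itself anchored by a large greedy rectangle, needing further blocking. With only $O(\log 1/\varepsilon)$ points the cascade cannot terminate cheaply. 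Concretely, if the ``pocket'' shrinks geometrically so that $O(\log 1/\varepsilon)$ rounds bring its area down to $\varepsilon$, then Bob covers a constant fraction of the current pocket at each step, and the total is $\sum_k \Omega(4^{-k}) = \Omega(1)$, not $O(\varepsilon)$. The only way to beat this is to make the greedy rectangle at each anchor a tiny fraction of the remaining region, which is precisely what requires a full $\beta$-staircase with $\Omega(\beta)$ reflex vertices — already $\Omega(1/\varepsilon)$ points for a single $\tfrac{2}{\varepsilon}$-staircase, far exceeding $O(\log 1/\varepsilon)$.

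The paper's proof is structurally different and non-adaptive. Alice precomputes a packing of $U$ by many interior-disjoint $\tfrac{2}{\varepsilon}$-staircases whose total area is $\geq 1-\tfrac{\varepsilon}{2}$, perturbed so each has a unique maximum rectangle. Every vertex of every staircase is added to $S$, plus one extra blocking point $p_i$ per staircase. The revelation order is a linear extension of a partial order on the staircases; this ensures that by the time the anchor $s_i$ of staircase $P_i$ is revealed, Bob's rectangle there is confined inside $P_i$ (hence $\leq \tfrac{\varepsilon}{2}\,\mathrm{area}(P_i)$ by the $\beta$-tile property), and the subsequent $p_i$ fences off the rest of $P_i$ forever. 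Summing gives total coverage $< \tfrac{\varepsilon}{2} + (1-\tfrac{\varepsilon}{2})\cdot\tfrac{\varepsilon}{2} < \varepsilon$. Your hyperbola intuition and the idea of a shrinking ``pocket'' bounded by Bob's rectangles is pointing at the right kind of region, but without (a) the $\beta$-staircase machinery giving a quantitative small-rectangle guarantee, (b) the recursive packing of many such staircases to cover almost all of $U$, and (c) a global revelation order that makes each anchor's greedy rectangle stay inside its staircase, the argument as sketched cannot establish the claimed bound.
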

\begin{proof}
We show that for every $\eps>0$, Alice can construct a finite sequence
of $n$ points $s_1,\ldots , s_n$, such that Bob can cover at most
$\eps$ area with a greedy strategy.
Essentially, Alice can force Bob to choose a rectangle from
a $\frac{\eps}{2}$-tile (using at most $\frac{\eps}{2}$ of the tile's area)
and then fence off the remainder of the tile so that it cannot be covered
later. Alice can make sure that the total area of these $\frac{\eps}{2}$-tiles
is arbitrarily close to 1, say $1-\frac{\eps}{2}$. Then Bob can cover
at most $\frac{\eps}{2} + (1-\frac{\eps}{2})\cdot \frac{\eps}{2}<\eps$ area.
We proceed with the details.

We say that a staircase polygon $P$ is a $\beta$-staircase, for some
$\beta\geq 1$, if the area of every axis-aligned rectangle contained
in $P$ is at most $\area(P)/\beta$. By Lemma~\ref{lem:sides}, for
every $\beta \geq 1$, $h>0$, and $w>0$, one can construct a $\beta$-staircase
of height $h$ and width $w$ whose area is roughly $\beta e^{1-\beta} hw$.

\begin{figure}[htbp]
\centering
\includegraphics[width=.95\textwidth]{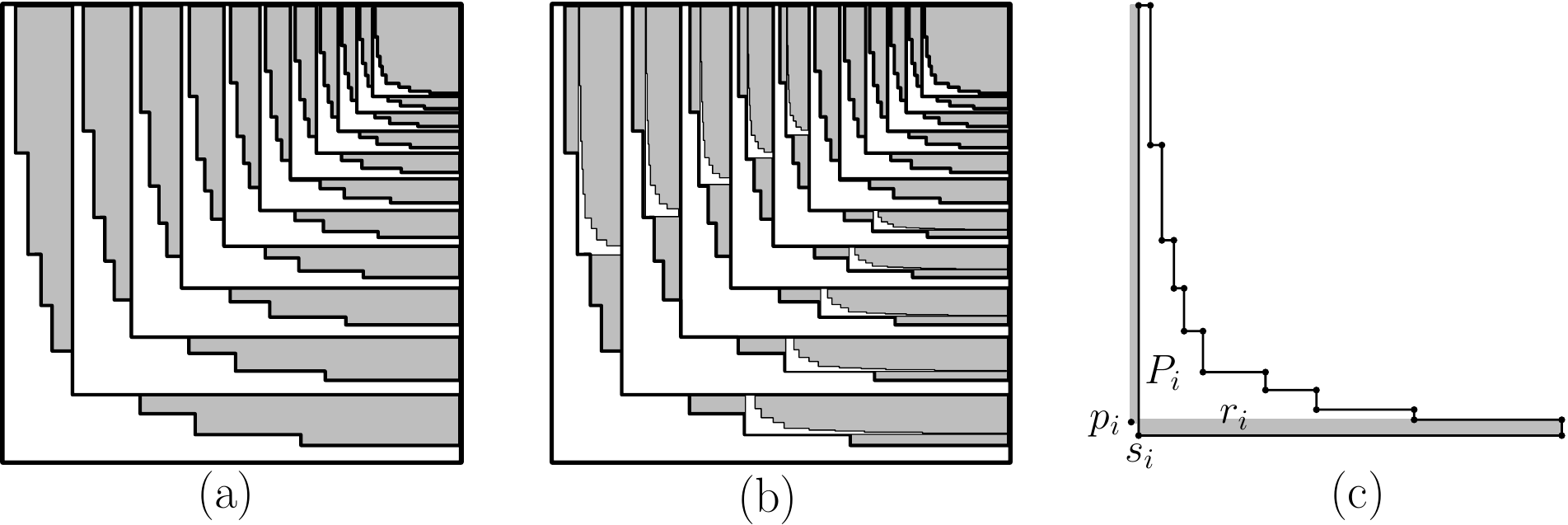}
%\vspace{-.5\baselineskip}
\caption{(a) An initial $\frac{2}{\eps}$-staircase packing.
(b) In any empty region (shaded gray), Alice can place an additional
  $\frac{2}{\eps}$-staircase.
(c) The placement of an extra point $p_i$ near the left side of
the maximal rectangle $r_i\subset P_i$.}
\label{fig:6}
%\vspace{-.5\baselineskip}
\end{figure}

Alice first computes a packing of the unit square $U=[0,1]^2$ with
$\frac{2}{\eps}$-staircases, by successively choosing interior-disjoint
$\frac{2}{\eps}$-staircases of smaller and smaller sizes until
their total area is at least $1-\frac{\eps}{2}$ (Fig.~\ref{fig:6}(a,b)).
Specifically, in the current step, given an axis aligned rectangle $R$
of height $h$ and width $w$, a $\frac{2}{\eps}$ staircase polygon $P$
with the same height and width is anchored at the lower left corner of
$R$, and the remaining space $R \setminus P$ is partitioned into
vertical or horizontal sectors to be processed (see Section~\ref{sec:analysis}).

Then she slightly shrinks these staircases, to make them
pairwise disjoint, and perturbs them to ensure that each
$\frac{2}{\eps}$-staircase $P_i$ contains a {\em unique} axis-aligned
rectangle $r_i$ of maximum area, and $r_i$ has the same width as $P_i$.
The point set $S$ contains, for every $\frac{2}{\eps}$-staircase
in this packing, all vertices of $P_i$, including the lower left
corner $s_i$. In addition,  for every $\frac{2}{\eps}$-staircase
whose lower left corner is not on the left side of $U$, $S$ also
contains a point $p_i$ very close to the left side of rectangle $r_i$,
as shown in Fig.~\ref{fig:6}(c).

It remains to determine the order in which Alice reveals the points to
Bob. The points associated with each $\frac{2}{\eps}$-staircase $P_i$
are revealed in a contiguous sequence such that the last two points
in each sequence are the lower left corner $s_i$ followed by the
extra point $p_i$. For the lower left corner $s_i$, Bob has to choose
the unique rectangle $r_i$ of maximum area in $P_i$, which is adjacent to the
lower side of $P_i$. For the extra point $p_i$, Bob has to choose a tall
rectangle of negligible area, which covers the left side of $P_i$.
These two rectangles guarantee that no subsequent rectangle can
cover any additional part of $P_i$, while
$\area(r_i) \leq \frac{\eps}{2} \cdot \area(P_i)$.

To determine the order of sequences of points associated with the
staircases, we define a partial order over the staircase polygons.
Note that in the initial staircase packing, each tip of every $P_i$
is adjacent to the left or lower side of another staircase, or the
right or upper side of $U$.
This defines a partial order: let $P_i\prec P_j$, if the tip of $P_j$ is
adjacent to the left or lower side of $P_i$; or if the lower left corner
of $P_i$ is a reflex vertex of $P_j$. Order the staircases in
any linear extension of this partial order. This ensures that Bob cannot
choose a rectangle intersecting the interior of $P_i$ before Alice
reveals the lower left corner $s_i$.
\end{proof}

\paragraph{Best versus worst greedy strategy.}
For a finite set $S\subset [0,1]^2$, and a permutation (ordering)
$\pi$ of $S$, we can select anchored rectangles greedily (ties are
broken arbitrarily) in the order prescribed by $\pi$. One can ask
which permutation gives the best or the worst performance
for a greedy strategy.
Our main result, Theorem~\ref{thm:1}, says that for every $n$-element
point set $S$, $(0,0)\in S$, we can find in $O(n\log n)$ time a
permutation $\pi$ for which the greedy strategy covers
$\Theta(1)$ area. In the worst case, greedy covers only $o(1)$ area by
Theorem~\ref{thm:multiround}.
In the best case, however, we will show (Lemma~\ref{lem:opt})
that greedy is always optimal for some permutation $\pi$. We say that
an anchored rectangle packing is {\em Pareto optimal} if each
rectangle $r(s)$ has maximum area assuming that all other rectangles
are fixed. It is clear that every optimal solution is Pareto optimal.

In particular, each rectangle in an optimal solution is bounded by the
two rays (going up and to the right) from its anchor point, and
two other such rays (from other points) that limit it from the right and from the top.
This immediately implies the existence of an exact algorithm for
the optimization problem running in exponential time, based on brute
force enumeration. The next lemma also shows that the greedy algorithm
and brute force enumeration of permutations yields
yet another exact algorithm for the optimization problem.

\begin{lemma}\label{lem:opt}
For every finite point set $S\subset [0,1]^2$ and every Pareto optimal
anchored packing $R=\{r(s):s\in S\}$, there is a permutation $\pi$
for which a greedy algorithm (with some tie breaking) computes $R$.
\end{lemma}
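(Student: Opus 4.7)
My plan is to produce the permutation $\pi$ by linearising a ``blocking'' partial order naturally attached to $R$. For each $s \in S$, call an anchor $s' \neq s$ a \emph{blocker} of $s$ if the rectangle $r(s')$ shares a segment of positive length with either the top edge or the right edge of $r(s)$; equivalently, either $y_{s'} = Y_s$ with $r(s')$'s bottom overlapping $r(s)$'s top, or $x_{s'} = X_s$ with $r(s')$'s left overlapping $r(s)$'s right. By Pareto optimality, whenever $Y_s < 1$ the top of $r(s)$ has at least one blocker (otherwise $r(s)$ could be extended upwards, contradicting maximality), and similarly for $X_s < 1$. Define a directed graph $D$ on $S$ by drawing an edge $s' \to s$ whenever $s'$ is a blocker of $s$.

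The crucial step is that $D$ is acyclic. I would prove this by direct case analysis on a putative cycle $s_1 \to s_2 \to \cdots \to s_k \to s_1$. Each edge is of ``top'' type, giving $y_{s_i} = Y_{s_{i+1}}$, $Y_{s_i} > Y_{s_{i+1}}$, $x_{s_i} < X_{s_{i+1}}$, and $X_{s_i} > x_{s_{i+1}}$, or of ``right'' type (the symmetric conditions with $x$ and $y$ interchanged). A cycle composed only of top edges chains into $Y_{s_1} > Y_{s_2} > \cdots > Y_{s_k} > Y_{s_1}$, an immediate contradiction, and a cycle of only right edges is symmetric. In a mixed cycle, the equalities $y_{s_i} = Y_{s_{i+1}}$ and $x_{s_i} = X_{s_{i+1}}$ propagated around the cycle, combined with the overlap inequalities of adjacent edges, force two distinct rectangles $r(s_i)$ and $r(s_j)$ to have overlapping interiors, contradicting that $R$ is a packing. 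This geometric case-chasing is the step I expect to be the main obstacle, although each case reduces to a short contradiction (as can be verified by hand for lengths $2$, $3$, and $4$).

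Once $D$ is acyclic, let $\pi$ be any linear extension of $D$, so that every blocker of $s$ precedes $s$ in $\pi$. I would verify by induction on $i$ that greedy in the order $\pi$, using a tie-breaking rule that prefers the rectangle $r(s)$ from $R$ whenever it is among the maximum-area candidates, selects exactly $r(\pi(i))$ at step $i$. By the choice of $\pi$, every blocker of $\pi(i)$ has already been processed, so the rectangles $\{r(\pi(j)) : j < i\}$ already in place include every obstacle that pins down the top and right sides of $r(\pi(i))$ in $R$. Consequently no rectangle anchored at $\pi(i)$ that is disjoint from the previously placed ones can strictly contain $r(\pi(i))$, while $r(\pi(i))$ itself fits (since $R$ is a packing), so $r(\pi(i))$ is a maximum-area candidate and the tie-breaking rule forces greedy to pick it. Therefore greedy with order $\pi$ computes $R$.
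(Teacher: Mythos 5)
Your blocker relation is too weak, and the final inference of the induction does not go through. You argue that because every blocker of $\pi(i)$ is already placed, ``no rectangle anchored at $\pi(i)$ that is disjoint from the previously placed ones can strictly contain $r(\pi(i))$,\ldots\ so $r(\pi(i))$ is a maximum-area candidate.'' The second half does not follow from the first: greedy compares \emph{all} rectangles anchored at $\pi(i)$, including ones of a different aspect ratio that need not contain $r(\pi(i))$. Placing the top and right blockers of $r(\pi(i))$ prevents extending $r(\pi(i))$, but it does not prevent a larger rectangle of a different shape whose obstructions in $R$ do not touch $r(\pi(i))$ and may not yet be placed.

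A concrete counterexample to your claim that any linear extension of $D$ works: take
$S=\{s_0=(0,0),\,s_1=(0.2,0.4),\,s_2=(0.4,0.2),\,s_3=(0,0.6),\,s_4=(0.4,0)\}$ with
$r(s_0)=[0,0.4]\times[0,0.4]$, $r(s_1)=[0.2,1]\times[0.4,1]$, $r(s_2)=[0.4,1]\times[0.2,0.4]$, $r(s_3)=[0,0.2]\times[0.6,1]$, $r(s_4)=[0.4,1]\times[0,0.2]$.
One checks this is a Pareto-optimal anchored packing. In your DAG $D$, the only blockers of $s_0$ are $s_1,s_2,s_4$ (each shares a positive-length boundary segment with the top or right edge of $r(s_0)$), and $s_3$ is \emph{not} a blocker of $s_0$; the only blocker of $s_3$ is $s_1$. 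Hence $\pi=(s_1,s_2,s_4,s_0,s_3)$ is a valid linear extension of $D$. But when greedy reaches $s_0$, only $r(s_1),r(s_2),r(s_4)$ are placed, and the rectangle $[0,0.2]\times[0,1]$, of area $0.2>0.16=\area(r(s_0))$, is disjoint from all of them; greedy must pick it (no tie), so it does not compute $R$. What is missing is exactly the dominance constraint $s_0\preceq s_3$.

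The paper's proof uses a broader relation and a different direction of reasoning. It builds $\pi$ from the \emph{end}: it declares $s'\prec s$ whenever $s$ dominates $s'$ \emph{or} an axis-parallel ray from $s'$ hits the lower or left side of $r(s)$ --- precisely the cases in which deleting $r(s)$ could let a rectangle anchored at $s'$ grow --- and takes $\pi$ to be the reverse of a linear extension. The invariant is that removing a $\prec$-minimal element preserves Pareto optimality of the remainder, so at each greedy step the current subproblem is again Pareto optimal and every anchored rectangle at the current point, of whatever shape, is already bounded by $\area(r(\pi(i)))$. That stronger invariant (rather than ``blockers are present'') is what makes the induction close. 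To salvage your approach you would need to enrich $D$ at least with dominance edges and then re-prove both acyclicity and the maximality step, at which point you would essentially be reconstructing the paper's relation.
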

\begin{proof}
Let $S\subset [0,1]^2$ be a finite set, which may not contain the origin.
Since every $r(s)$ is Pareto optimal, it is a greedy choice assuming
that $s$ is the last point in the order $\pi$. Suppose that $s\in S$ is the
last point in a permutation $\pi$. If the smaller problem $S\setminus \{s\}$
with $R\setminus \{r(s)\}$ is not Pareto optimal, then there is a point $s'\in S$
for which we could choose a larger rectangle anchored at $s'$, which
intersects $r(s)$ only. 
In this case, either $s$ dominates $s'$ or the ray shot from $s'$ vertically
up (resp., horizontally right) hits the lower (resp., left) side of rectangle $r(s)$.
This motivates the definition of a binary relation over $S$, which is
an extension of the dominance order. Let $s'\prec s$ if either $s$
dominates $s'$ or an axis-aligned ray shot from $s'$ hits the lower or
left side of rectangle $r(s)$. It is not difficult to see that this is
a partial order over $S$. 
If $s\in S$ is a minimal element in the poset $(S,\prec)$, then the rectangles
in $R\setminus \{r(s)\}$ are still Pareto optimal for the anchors $S\setminus \{s\}$.
Now let $\pi$ be the reverse order of any linear extension of this partial order.
\end{proof}

We have shown (our main result) that for any set of $n$ points in the
unit square $U=[0,1]^2$, one can find a set of disjoint empty
rectangles anchored at the given points and covering more than $9\%$
of $U$. The same conclusion holds for points in any axis-aligned rectangle $V$
instead of $U$, since it is straightforward to use an affine transformation
to map the input into the unit square. Concerning the bound obtained,
a sizable gap to the conjectured $50\%$ remains. While certainly small
adjustments in our proof can lead to improvements in the bound,
obtaining substantial improvements probably requires new ideas.

\old{
\paragraph{Open problems.} We conclude with some obvious questions:
\begin{enumerate} \itemsep 1pt
\item Do the rectangles constructed by {\sc TilePacking} always cover
an area of at least $1/2$ in the plane?
\item What is the answer to the question in $3$-space?
Do the rectangular boxes constructed by {\sc TilePacking} always cover
a constant volume (perhaps $1/3$)?
\end{enumerate} 
} % old

\paragraph{Acknowledgement.}
The authors thank Richard Guy for tracing back the origins of this problem 
and Jan Kyn\v{c}l for comments and remarks.

\end{document}